\documentclass[11pt]{article}

\usepackage[T1]{fontenc}
\usepackage[utf8]{inputenc}
\usepackage{lmodern}
\usepackage{amsmath,amssymb,amsthm}
\usepackage[a4paper,margin=29mm]{geometry}
\usepackage[hidelinks]{hyperref}

\newtheorem{theorem}{Theorem}[section]
\newtheorem{proposition}[theorem]{Proposition}
\newtheorem{lemma}[theorem]{Lemma}
\newtheorem{corollary}[theorem]{Corollary}
\theoremstyle{remark}

\newcommand{\D}{\mathbb D}
\newcommand{\T}{\mathbb T}
\newcommand{\C}{\mathbb C}
\newcommand{\W}{W}

\title{Sch\"affer's matrix inequality: the exact asymptotic constant}
\author{%
Samy Houache\textsuperscript{1}\quad
Oleg Szehr\textsuperscript{2}\quad
Rachid Zarouf\textsuperscript{3,4}\\[0.6em]
\footnotesize \textsuperscript{1}University of Bordeaux, Institut de Math\'ematiques de Bordeaux, Thales AVS, France\\
\footnotesize \textsuperscript{2}Dalle Molle Institute for Artificial Intelligence (IDSIA),\\[-0.15em]
\footnotesize SUPSI--USI, Lugano-Viganello, Switzerland\\
\footnotesize \textsuperscript{3}Aix-Marseille University, University of Toulon, CNRS, CPT, Marseille, France\\
\footnotesize \textsuperscript{4}Aix-Marseille University, Laboratory ADEF, Marseille, France\\[0.35em]
\footnotesize E-mail: \href{mailto:samy.houache@math.u-bordeaux.fr}{\texttt{samy.houache@math.u-bordeaux.fr}}\\[-0.15em]
\footnotesize E-mail: \href{mailto:oleg.szehr@idsia.ch}{\texttt{oleg.szehr@idsia.ch}}\\[-0.15em]
\footnotesize E-mail: \href{mailto:rachid.zarouf@univ-amu.fr}{\texttt{rachid.zarouf@univ-amu.fr}}}
\date{}

\begin{document}
\maketitle
\begin{abstract}
Let $\mathcal S_n$ denote the smallest constant such that
\[
|\det T|\|T^{-1}\|
\leq \mathcal S_n \|T\|^{n-1}
\]
for every invertible operator $T$ on every $n$-dimensional complex
Banach space. In Hilbert space the optimal constant is $1$. For
arbitrary Banach spaces, J.~J.~Sch\"affer proved in 1970 that
\[
\mathcal S_n\leq \sqrt{en}.
\]
Subsequent work showed that $\mathcal S_n$ grows like $\sqrt n$, but the
sharp asymptotic constant has remained open for more than five decades.
We resolve this problem by proving
\[
\lim_{n\to\infty}\frac{\mathcal S_n}{\sqrt n}=\sqrt e.
\]
Thus Sch\"affer's upper bound is asymptotically sharp, including its constant.

Our proof is constructive, providing explicit Banach-space norms through duality and explicit matrices through the theory of model operators.
At the analytic core of the argument, an extremal formulation of Sch\"affer's problem in the Wiener algebra reduces the matching asymptotic lower bound for $\mathcal S_n$ to uniformly controlling the Taylor coefficients of products $QB_n$, where $B_n$ is a finite Blaschke product of degree $n$ and $Q$ is a polynomial factor. We optimize
simultaneously the zero distribution of $B_n$ and the choice of $Q$.
The resulting zeros follow a logarithmic asymptotic distribution, and a
sharp uniform asymptotic analysis of the Taylor coefficients of $QB_n$ yields the
constant $\sqrt e$. The corresponding model operators then yield
matrices with these spectra that asymptotically attain Sch\"affer's bound.
\end{abstract}

\medskip
\noindent\textbf{2020 Mathematics Subject Classification.}
Primary 15A60; Secondary 30J10, 41A50, 42A16, 47A30.

\smallskip
\noindent\textbf{Key words and phrases.}
Sch\"affer's matrix inequality, Blaschke products, Wiener algebra,
H\"older duality, Fourier coefficients, weighted polynomial extremal problem, logarithmic eigenvalue distribution.

\section{Introduction}\label{sec:introduction}

A basic problem in finite-dimensional operator theory is to control the
norm of the inverse of an operator from information about the operator
itself. A particularly natural scale-invariant form of this problem
combines the norm of $T$, the norm of its inverse, and its determinant, which measures the global volume scaling of $T$.

In the Hilbert space case the relation is exact. Indeed, if
$s_1(T)\geq\cdots\geq s_n(T)>0$ are the singular values of an
invertible operator $T$, then
\[
 |\det T|\,\|T^{-1}\|_2
 =\prod_{j=1}^{n-1}s_j(T)
 \leq \|T\|_2^{n-1}.
\]
Here $\C^n$ is equipped with its Euclidean norm and $\|\cdot\|_2$
denotes the induced operator norm. The constant $1$ is attained by the
identity. Thus, in Hilbert space, volume scaling and extremal stretching
control the size of the inverse with no loss, uniformly in the dimension.

The situation changes when the Hilbertian structure is removed. For an
arbitrary norm on $\C^n$, we denote by $\|\cdot\|$ the corresponding
induced operator norm. The determinant still measures volume scaling,
while $\|T\|$ and $\|T^{-1}\|^{-1}$ measure, respectively, the
maximal expansion and minimal contraction of $T$. Unlike in the Hilbert
space case, however, there is no singular-value decomposition compatible
with an arbitrary norm that relates these quantities.

The corresponding universal inverse problem is therefore to determine
the smallest constant $\mathcal S_n$ for which
\[
 |\det T|\,\|T^{-1}\|
 \leq \mathcal S_n\,\|T\|^{n-1}
\]
holds for every norm on $\C^n$ and every invertible operator
$T\in M_n(\C)$. From this viewpoint, $\mathcal S_n$ measures the worst
possible loss in the Hilbert-space inverse inequality caused by
arbitrary Banach-space geometry.

This problem was introduced systematically by J.~J.~Sch\"affer in his
1970 paper \emph{Norms and determinants of linear mappings}
\cite{Schaeffer}. He proved the universal estimate
\[
\mathcal S_n\leq\sqrt{en},
\]
and conjectured that, as in the Hilbert space case, the sequence
$(\mathcal S_n)_n$ should remain bounded. This conjecture was disproved
by E.~Gluskin, M.~Meyer and A.~Pajor~\cite{GMP}. Subsequent lower bounds, including a probabilistic argument of J.~Bourgain\footnote{The same article~\cite{GMP} contains, as Theorem~5, a stronger result due to Bourgain.}~\cite[Theorem~5]{GMP} connected Sch\"affer's problem to Tur\'an-type power-sum estimates. Building on Bourgain's estimate from~\cite{GMP} and a number-theoretic construction of H.~Montgomery \cite[Example~6, p.~101]{Montgomery}, also discussed by P.~Tur\'an in \cite[p.~83]{Turan84}, H.~Queff\'elec~\cite{Queffelec93} established that $\mathcal S_n$ has the correct order of growth $\sqrt n$.
These arguments are essentially nonconstructive: probabilistic or
number-theoretic estimates for power sums establish the required lower
bounds without producing explicit matrices and Banach-space norms
realizing them. As a later formulation of the remaining problem, J.~Andersson~\cite{Andersson06} conjectured in 2006 that $\mathcal S_n/\sqrt n$ converges and asked for the value of its limit.

A different, constructive approach that avoids the power-sum method was
introduced by O.~Szehr and R.~Zarouf~\cite{SZJMPA}. Using
function-theoretic duality together with the theory of model operators,
they obtained, respectively, explicit Banach-space norms and explicit
matrices exhibiting the order $\sqrt n$. The present paper follows this
constructive line of argument and develops it further to determine the
sharp asymptotic constant. More precisely, we prove the following theorem; the precise construction
of the norms and matrices is given in
Theorem~\ref{thm:explicit-matrices} below.
\begin{theorem}\label{thm:main-intro}
For each $n\geq2$, there exist a norm $\|\cdot\|_*$ on $\C^n$ and an
invertible matrix $T_n\in M_n(\C)$, both given explicitly, such that
\[
 \frac{|\det T_n|\,\|T_n^{-1}\|_*}
      {\|T_n\|_*^{\,n-1}\sqrt n}
 \longrightarrow \sqrt e .
\]
Consequently,
\[
 \lim_{n\to\infty}\frac{\mathcal S_n}{\sqrt n}=\sqrt e.
\]
In particular, Sch\"affer's upper bound
$\mathcal S_n\leq\sqrt{en}$ is asymptotically sharp.
\end{theorem}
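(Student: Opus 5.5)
Since Sch\"affer's upper bound $\mathcal S_n\le\sqrt{en}$ is already available, the plan is to construct, for each $n$, a norm and a matrix whose ratio is at least $(1-o(1))\sqrt{en}$. Then $\liminf \mathcal S_n/\sqrt n\ge\sqrt e$, and together with the upper bound this gives the limit.

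\textbf{Step 1: reduction to the Wiener algebra.} For a finite Blaschke product $B_n=\prod_{j=1}^n b_{\lambda_j}$ with zeros $\lambda_j\in\D$, consider the model space $K_{B_n}=H^2\ominus B_nH^2$ and the compressed shift $M_{B_n}$. Its spectrum is $\{\lambda_j\}$ and $\det M_{B_n}=\prod\lambda_j$. Equip $K_{B_n}$ with the quotient norm of $\W/B_n\W$, where $\W$ is the Wiener algebra of absolutely convergent Taylor series. Multiplication by $z$ is a contraction on $\W$, so $\|M_{B_n}\|_*\le1$. For the inverse we use the identity
\[
\det(M_{B_n})\,M_{B_n}^{-1}=\text{(multiplication by } (B_n(z)-B_n(0))/z),
\]
up to a unimodular factor. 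Hence $|\det M_{B_n}|\,\|M_{B_n}^{-1}\|_*$ is at least the quotient norm of $(B_n-B_n(0))/z$ in $\W/B_n\W$, namely $\inf_{g\in\W}\|(B_n-B_n(0))/z+B_ng\|_{\W}$. By H\"older duality between $\W$ (coefficients in $\ell^1$) and $\ell^\infty$, this infimum equals $\sup|\langle (B_n-B_n(0))/z,\phi\rangle|$, taken over $\phi$ annihilating $zB_nH^\infty$-type multiples with $\ell^\infty$ Taylor coefficients at most $1$. Such annihilators are of the form $\phi=\overline{\cdot}$ of $Q B_n$-related elements of $K_{B_n}$. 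Concretely, the problem reduces to exhibiting a polynomial $Q$ (of degree at most $n-1$, after normalization) such that:
\begin{itemize}
\item $\sup_k|\widehat{QB_n}(k)|\le 1+o(1)$,
\item while the pairing, essentially $\sum_j\widehat{Q}(j)\cdots$, a linear functional of the coefficients, is at least $(1-o(1))\sqrt{en}$.
\end{itemize}
This is the ``extremal formulation in the Wiener algebra'' from the abstract, and the explicit norm $\|\cdot\|_*$ is the transported quotient norm.

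\textbf{Step 2: choice of zeros and $Q$.} Guided by Sch\"affer's proof, where the extremal case is $\binom{n}{k}$-type weights whose maximum gives $\sqrt{en}$ via $(1+1/n)^n$ ratios, I would take zeros $\lambda_j=r_j$ on a radial ray with a logarithmic distribution, for instance $\lambda_j$ with $-\log|\lambda_j|\approx\log\bigl(n/(n-j)\bigr)$-spacing. The polynomial $Q$ would have coefficients that are a smooth (Gaussian-like) window of width $\sqrt n$, so that $\|Q\|_{\ell^2}^2\sim\sqrt n$ scales correctly. The pairing would then be computed as $\sum_k|\widehat{Q}(k)|^2$-type sums, normalized so that the limit constant becomes $\sqrt e$. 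I would fix parameters by optimizing a continuous variational problem: the limiting density of zeros versus the limiting profile of $Q$.

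\textbf{Step 3 (main obstacle): uniform coefficient asymptotics.} We must show $\max_k|\widehat{QB_n}(k)|\le1+o(1)$ uniformly in $k$, including the tails $k\gg n$ where $B_n$ contributes oscillatory decaying coefficients. I would write $\widehat{QB_n}(k)=\frac1{2\pi i}\oint Q(z)B_n(z)z^{-k-1}dz$ and apply a saddle-point analysis. With logarithmically distributed zeros, $\log B_n$ has an explicit continuum approximation, giving a uniform saddle location in terms of $k/n$. The three regimes, $k\lesssim n$, $k\asymp n$ and $k\gg n$, would be handled separately, bounding error terms in the discretization of the zero distribution. Finally, the model operator $M_{B_n}$ is written in an explicit basis, for example the Malmquist--Walsh basis, to obtain the matrix $T_n$. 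The ratio tends to $\sqrt e$ because it is squeezed between the lower bound from Steps 1 and 2 and Sch\"affer's upper bound.
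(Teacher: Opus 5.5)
Your Step~1 is sound and matches the paper's framework. For the model operator on $\W/B_n\W$ one has $|\det T|\,\|T^{-1}\|_*=\varphi(\lambda_1,\dots,\lambda_n)$, and a test functional built from $QB_n$ gives the paper's H\"older bound $\varphi\ge 1/\|QB_n\|_{l_\infty^A}-|d|$: pairing $z^mh$ with $QB_n$ for monic $Q$ of degree $m$ leaves only the leading coefficient of $Q$.

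The gap is in Steps~2--3, which carry the whole theorem. Nothing in your choices produces $\sqrt e$; you only say the pairing is ``normalized so that the limit constant becomes $\sqrt e$.'' Your concrete choice of real (radial) zeros actually works against the sharp constant. If all $\lambda_j$ are real, $B_n$ has real Taylor coefficients, so $\tau_n=\phi_n'/n$ satisfies $\tau_n(2\pi-t)=\tau_n(t)$. Every frequency $a=k/n$ then has two stationary points, $t_a$ and $2\pi-t_a$. Their contributions have a relative phase that shifts by about $-2t_a$ per unit step in $k$. Under the natural stationary-phase analysis, $\sqrt n\,\|QB_n\|_{l_\infty^A}$ is therefore governed by $\sup_{0<t<\pi}\omega(t)\bigl(|Q(e^{it})|+|Q(e^{-it})|\bigr)$ with $\omega=(2\pi|\tau'|)^{-1/2}$. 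Repeat the integration-by-parts/Jensen argument of Proposition~\ref{prop:log-law} on $(0,\pi)$. It shows the log-mean of $\omega$ is at least $-\tfrac12-\tfrac12\log2$. Adding AM--GM and Jensen's formula for monic $Q$ then forces this supremum to be at least $\sqrt2\,e^{-1/2}$. So this configuration yields at best $\sqrt{en/2}$.

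What is missing is the paper's two-level optimization.
\begin{itemize}
\item \emph{Zeros.} They lie on one circle $|z|=r_n$ with $r_n\to1$ and $r_n^n\to0$ (the paper takes $r_n=1-n^{-1/6}$). Their angles are chosen so that the limiting phase derivative $\tau$ is strictly decreasing on $(0,2\pi)$, giving one stationary point per frequency.
\item \emph{Weight and density.} Stationary phase gives $\sqrt n\,|\widehat{QB_n}(k)|\approx\omega_\tau(t_a)|Q(e^{it_a})|$. Jensen's inequality gives $\frac1{2\pi}\int_0^{2\pi}\log\omega_\tau\ge-\frac12$, with equality only for $\tau(t)=\log\frac{2\pi}{t}$.
\item \emph{The factor $Q$.} A \emph{fixed} monic $Q$ comes from the weighted Chebyshev problem $\inf_Q\max_t\sqrt{t/2\pi}\,|Q(e^{it})|=e^{-1/2}$. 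The lower bound is Jensen's formula; the upper bound uses $Q=(z-1)^Lq_m^*$ with $q_m=(1+H/m)^m\approx e^H$.
\item \emph{Source of $\sqrt n$.} The factor $\sqrt n$ comes from the $n^{-1/2}$ stationary-phase decay of the coefficients of $B_n$, not from a Gaussian window of width $\sqrt n$ in $Q$.
\end{itemize}

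Your Step~3 also misses the real technical obstruction. Near $t\equiv0 \pmod{2\pi}$ the density is singular, and $\tau_n$ is neither close to $\tau$ nor monotone there. The paper therefore imposes $(z-1)^{16}\mid Q$, so the contribution of $[0,\delta_n]\cup[2\pi-\delta_n,2\pi]$ is $O(\sqrt n\,\delta_n^{17})=O(n^{-1/32})$. It runs uniform stationary phase only on $J_n=[\delta_n,2\pi-\delta_n]$, using the $C^2$ convergence $\tau_n\to\tau$ there and van der Corput bounds away from the critical point.
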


This identifies the precise first-order dimension-dependent loss in the inverse inequality caused by arbitrary Banach-space geometry: the optimal universal loss relative to the Hilbert-space inequality is
$(\sqrt e+o(1))\sqrt n$. 

The analytic mechanism underlying the construction is the H\"older
duality between the Wiener algebra $\W$ and $l_\infty^A$. The
Gluskin--Meyer--Pajor representation expresses $\mathcal S_n$ as the
supremum, over all spectra $(\lambda_1,\ldots,\lambda_n)\in\D^n$, of a Wiener-algebra interpolation functional $\varphi(\lambda_1,\ldots,\lambda_n)$; see
Section~\ref{sec:duality}. For a prescribed spectrum, let $B$ be the
corresponding finite Blaschke product. If $h$ is admissible for the
interpolation problem, the division property of $\W$ gives $h/B\in\W$.
Pairing $h$ with $QB$, where $Q$ is a suitable monic polynomial, and
applying H\"older duality yields
\[
 \varphi(\lambda_1,\ldots,\lambda_n)
 \geq
 \frac{1}{\|QB\|_{l_\infty^A}}
 -\prod_{j=1}^n|\lambda_j|.
\]
Thus, for a fixed spectrum, the required lower estimate reduces to
controlling the largest Taylor coefficient of $QB$. In
\cite{SZJMPA}, the choice of a fixed singleton spectrum and
$Q=1-z^2$ yields the order $\sqrt n$.
Here we allow the spectrum to vary with $n$ and optimize both its asymptotic distribution and the polynomial factor. This leads to a logarithmic asymptotic distribution of the zeros of $B_n$. After discretizing this distribution, a sharp uniform asymptotic
analysis of the Taylor coefficients of $QB_n$ produces the constant
$\sqrt e$.

\subsubsection*{Outline of the paper.}
Section~\ref{sec:duality} recalls the Gluskin--Meyer--Pajor
representation~\cite{GMP} of $\mathcal S_n$ and the sharp
H\"older-duality estimate from \cite{SZJMPA}.
Section~\ref{sec:spectrum-polynomial} constructs the sharp lower bound.
In Subsection~\ref{sec:limiting-extremal}, stationary-phase
considerations lead to a limiting extremal problem for the distribution
of the zeros of the Blaschke product, whose optimal density is the
logarithmic law $\tau(t)=\log\frac{2\pi}{t}$.
In Subsection~\ref{sec:polynomial-factor}, we optimize the monic
polynomial factor $Q$ for the resulting weight and show that the sharp
value of the associated weighted polynomial extremal problem is
$e^{-1/2}$. Finally, in Subsection~\ref{sec:eigenvalue-distribution},
we discretize the logarithmic density to obtain the explicit spectra
$(\lambda_{j,n})_{j=1}^n$ and we establish the uniform asymptotic estimate
for the largest Taylor coefficient of $QB_n$ in Proposition~\ref{prop:coeff-norm}. Combining this estimate
with the model-operator construction yields our main result,
Theorem~\ref{thm:explicit-matrices}. The proof of Proposition~\ref{prop:coeff-norm} is deferred to
Appendix~\ref{sec:coeff-proof}; it follows the same asymptotic
approach as in \cite{SZJMPA}, combining stationary phase with van der
Corput estimates.

\section{Duality and interpolation for a prescribed spectrum}\label{sec:duality}

For $f\in\operatorname{Hol}(\D)$ write
\[
 f(z)=\sum_{k\ge0}\widehat f(k)z^k,
 \qquad
 \widehat f(k)=\frac{f^{(k)}(0)}{k!}.
\]
We use the two coefficient norms
\[
 \|f\|_{\W}=\sum_{k\ge0}|\widehat f(k)|,
 \qquad
 \|f\|_{{l_\infty^A}}=\sup_{k\ge0}|\widehat f(k)|,
\]
and denote by $\W$ and $l_\infty^A$ the corresponding spaces of
functions in $\operatorname{Hol}(\D)$ for which these norms are finite.
For $f\in\W$ and $g\in l_\infty^A$, define
\[
 \langle f,g\rangle
 =\sum_{k\ge0}\widehat f(k)\overline{\widehat g(k)}.
\]
The series is absolutely convergent, and H\"older's inequality gives
\[
 |\langle f,g\rangle|\le \|f\|_{\W}\,\|g\|_{{l_\infty^A}}.
\]
When $f,g\in H^2$ this bracket is simply the normalized
$L^2(\T)$ inner product of their boundary values:
\[
 \langle f,g\rangle
 =\frac1{2\pi}\int_0^{2\pi}
 f(e^{it})\overline{g(e^{it})}\,dt.
\]

Let $\lambda_1,\ldots,\lambda_n$ be distinct points of
$\D\setminus\{0\}$. Put
\[
 d=\prod_{j=1}^n\lambda_j,
 \qquad
 B(z)=\prod_{j=1}^n b_{\lambda_j}(z),
 \qquad
 b_\lambda(z)=\frac{z-\lambda}{1-\overline\lambda z}.
\]
Thus $B(0)=(-1)^nd$. For such a spectrum, define, as in
\cite{SZJMPA},
\begin{equation}\label{eq:varphi}
 \varphi(\lambda_1,\ldots,\lambda_n)=
 \inf\left\{\|h\|_{\W}-|h(0)|:
 h\in\W,\ h(0)=d,\ h(\lambda_j)=0,\ 1\le j\le n\right\}.
\end{equation}

The significance of $\varphi$ comes from the
Gluskin--Meyer--Pajor representation~\cite{GMP}: the optimization over
all norms and invertible matrices in Sch\"affer's problem can be reduced
exactly to an optimization over spectra,
\[
 \mathcal S_n
 =
 \sup_{(\lambda_1,\ldots,\lambda_n)\in\D^n}
 \varphi(\lambda_1,\ldots,\lambda_n),
\]
with the interpolation conditions understood with multiplicity in the
general case. In particular, every prescribed spectrum
$(\lambda_1,\ldots,\lambda_n)$ gives the lower bound
\[
 \mathcal S_n\geq
 \varphi(\lambda_1,\ldots,\lambda_n).
\]
Thus lower bounds for $\mathcal S_n$ may be obtained by prescribing a
spectrum and estimating its interpolation value $\varphi$.

The corresponding prescribed-spectrum extremal problem can moreover be
realized by model operators. This belongs to the general model-operator
approach to interpolation in function algebras; see, for instance,
N.~K.~Nikolski~\cite{Nikolski05}. In the Wiener-algebra setting
relevant here, multiplication by $z$ on the quotient $\W/B\W$,
together with a functional-calculus argument for $1/z$, realizes
$\varphi$ by an explicit matrix; see
\cite[Lemma~7, Theorem~8 and Remark~12]{SZJMPA}. Consequently, an
effective lower bound for $\varphi$ not only yields a lower bound for
$\mathcal S_n$, but also leads to an explicit matrix example.

The following estimate is the H\"older-duality estimate used in Section~3 of \cite{SZJMPA}, where the test polynomial was $1-z^2$ and the Blaschke product was simply $B(z)=b_\lambda(z)^n$. We state it with an arbitrary monic polynomial because this additional freedom is what will be optimized below. As explained in \cite[Remark~10]{SZJMPA}, the underlying duality approach is sharp.

\begin{proposition}\label{prop:holder-lower}
For every monic polynomial $Q$,
\begin{equation}\label{eq:holder-lower}
 \varphi(\lambda_1,\ldots,\lambda_n)\ge
 \frac1{\|QB\|_{{l_\infty^A}}}-|d|.
\end{equation}
\end{proposition}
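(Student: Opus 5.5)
The plan is to prove the stronger statement $\|h\|_{\W}\ge 1/\|QB\|_{l_\infty^A}$ for every admissible $h$ in \eqref{eq:varphi}. Since every admissible $h$ has $|h(0)|=|d|$, this gives $\|h\|_{\W}-|h(0)|\ge 1/\|QB\|_{l_\infty^A}-|d|$, and taking the infimum over $h$ yields \eqref{eq:holder-lower}. We read ``monic'' in the usual sense: $Q(z)=z^m+\sum_{k<m}\widehat Q(k)z^k$ with $\widehat Q(m)=1$.

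\emph{Step 1 (division).} Let $h\in\W$ with $h(0)=d$ and $h(\lambda_j)=0$ for all $j$. By the division property of $\W$ we can write $h=Bg$ with $g\in\W$. This holds because each factor $1/(1-\overline\lambda z)$ lies in $\W$, and dividing a $\W$-function vanishing at $\lambda\in\D$ by $z-\lambda$ stays in $\W$ (the coefficients of the quotient are the convolution of those of $h$ with the geometric sequence $(\lambda^k)$, which is summable). Evaluating at $0$ gives $g(0)=h(0)/B(0)=d/((-1)^n d)=(-1)^n$, so $|\widehat g(0)|=1$.

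\emph{Step 2 (the pairing).} We test $z^m h$ against $QB$. Multiplication by $z^m$ is an isometry of $\W$, so $\|z^mh\|_{\W}=\|h\|_{\W}$. Since $QB$ is analytic on a neighbourhood of $\overline\D$, it belongs to $\W\subset l_\infty^A$. Both functions are continuous on $\T$, so the bracket equals the boundary integral. Using $|B|=1$ on $\T$,
\[
\langle z^mh,QB\rangle=\frac1{2\pi}\int_0^{2\pi}e^{imt}g(e^{it})B(e^{it})\overline{Q(e^{it})B(e^{it})}\,dt=\langle z^mg,Q\rangle .
\]
The Fourier coefficients of $z^mg$ vanish below index $m$, and $Q$ has degree $m$. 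Hence only the index $k=m$ survives, and $\langle z^mg,Q\rangle=\widehat g(0)\overline{\widehat Q(m)}=\widehat g(0)$, which has modulus $1$.

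\emph{Step 3 (H\"older).} By H\"older's inequality, $1=|\langle z^mh,QB\rangle|\le\|h\|_{\W}\|QB\|_{l_\infty^A}$, which is the claimed bound on $\|h\|_{\W}$.

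The only point that needs care is Step 1, namely the division property, together with checking that the bracket really coincides with the $L^2(\T)$ pairing so that $|B|=1$ can be used. Both follow from absolute convergence of the coefficient series. The essential idea is to shift $h$ by $z^m$, so that monicity of $Q$ isolates the single coefficient $\widehat g(0)$, whose modulus is fixed by the interpolation condition $h(0)=d$.
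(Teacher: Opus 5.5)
Your proof is correct and follows the paper's argument exactly: you factor $h=Bg$ by the division property, pair $z^mh$ with $QB$ so that $|B|=1$ on $\T$ and the monicity of $Q$ reduce the bracket to $\widehat g(0)=(-1)^n$, and then apply H\"older together with $|h(0)|=|d|$. The only addition is your sketch of the division property, which the paper cites from earlier work; there the quotient coefficients are the tail sums $\sum_{j>k}\widehat h(j)\lambda^{j-k-1}$ (this uses $h(\lambda)=0$) rather than a plain convolution, but the summability bound $\|h\|_{\W}/(1-|\lambda|)$ you need does hold.
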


\begin{proof}
Let $h$ be admissible in \eqref{eq:varphi}. By the division property of the Wiener algebra \cite[Section~3]{SZJMPA}, $h=Bu$ for some $u\in\W$, and $u(0)=(-1)^n$. If $m=\deg Q$, then $Q$ is monic and $|B|=1$ on $\T$, hence
\[
 \langle z^mh,QB\rangle=\langle z^mu,Q\rangle=(-1)^n.
\]
Since $\|z^mh\|_{\W}=\|h\|_{\W}$, H\"older's inequality gives
\[
 1\le \|h\|_{\W}\,\|QB\|_{{l_\infty^A}}.
\]
Taking the infimum over $h$ proves \eqref{eq:holder-lower}.
\end{proof}

\section{Construction of the sharp lower bound}\label{sec:spectrum-polynomial}

\subsection{Optimization of the distribution of zeros}\label{sec:limiting-extremal}

Let
\[
 \lambda_{j,n}=r_ne^{i\theta_{j,n}},\qquad 1\leq j\leq n,
\]
be distinct points. The radial parameters $0<r_n<1$ will be chosen so
that
\[
 r_n\to1,\qquad r_n^n\to0,
\]
while the angles $\theta_{j,n}$ will be specified later. Since
$|d|=r_n^n$, the second condition makes the determinant term in
Proposition~\ref{prop:holder-lower} asymptotically negligible. Consider the Blaschke product and the empirical angular distribution
\[
 B_n(z)=\prod_{j=1}^n b_{\lambda_{j,n}}(z),
 \qquad
 \mu_n=\frac1n\sum_{j=1}^n\delta_{\theta_{j,n}}.
\]
Writing
\[
 B_n(e^{it})=e^{i\phi_n(t)},
 \qquad
 \tau_n(t)=\frac{\phi_n'(t)}n,
\]
we have
\begin{equation}\label{eq:tau-poisson-measure}
 \tau_n(t)
 =\int_0^{2\pi}P_{r_n}(t-s)\,d\mu_n(s),
 \qquad
 P_\rho(x)=\frac{1-\rho^2}{1-2\rho\cos x+\rho^2},
\end{equation}
where $P_\rho$ is the Poisson kernel. Thus $\tau_n$ is the Poisson regularization of the
distribution $\mu_n$. Since
\[
 \frac1{2\pi}\int_0^{2\pi}\tau_n(t)\,dt=1,
\]
we seek limiting profile of the functions $\tau_n$ that satisfy the same
normalization.

We now show, by stationary-phase considerations, how such a limiting
profile $\tau$ governs the asymptotic size of the Taylor coefficients
of $QB_n$, and thereby derive the extremal problem for the distribution
of the zeros. The uniform
estimates needed to justify this passage for the discrete construction
are proved in Proposition~\ref{prop:coeff-norm} and
Appendix~\ref{sec:coeff-proof}. For a Fourier index $k\ge0$, put $a=k/n$. Then
\[
 \widehat{QB_n}(k)
 =\frac1{2\pi}\int_0^{2\pi}
 Q(e^{it})e^{in\psi_{n,a}(t)}\,dt,
 \qquad
 \psi_{n,a}(t)=\frac{\phi_n(t)}n-at,
\]
and hence
\begin{equation}\label{eq:phase-derivatives-motivation}
 \psi_{n,a}'(t)=\tau_n(t)-a,
 \qquad
 \psi_{n,a}''(t)=\tau_n'(t).
\end{equation}
Thus a stationary point $t_{n,a}$ satisfies $\tau_n(t_{n,a})=a$. Suppose that $\tau_n\to\tau$ and $\tau_n'\to\tau'$ locally uniformly on
$(0,2\pi)$, where $\tau>0$ and $\tau'<0$. If
\[
 a_n=\frac{k_n}{n}\longrightarrow a
\]
and the corresponding stationary points remain in a compact subinterval of
$(0,2\pi)$, then
\[
 t_{n,a_n}\longrightarrow t_a,\qquad
 \tau(t_a)=a,\qquad
 \tau_n'(t_{n,a_n})\longrightarrow\tau'(t_a)<0.
\]
Under these assumptions, stationary phase gives
\[
 \sqrt n\,|\widehat{QB_n}(k_n)|
 =
 \frac{|Q(e^{it_{n,a_n}})|}
      {\sqrt{2\pi[-\tau_n'(t_{n,a_n})]}}+o(1),
\]
and passing to the limit in the prefactor gives
\[
 \sqrt n\,|\widehat{QB_n}(k_n)|
 =
 \frac{|Q(e^{it_a})|}
      {\sqrt{2\pi[-\tau'(t_a)]}}+o(1).
\]
This suggests introducing the weight
\[
 \omega_\tau(t)
 =\frac1{\sqrt{2\pi[-\tau'(t)]}},
 \qquad 0<t<2\pi.
\]
This suggests that, at the limiting level,
$\sqrt n\,\|QB_n\|_{l_\infty^A}$ is governed by the weighted maximum
\[
 \sup_{0<t<2\pi}\omega_\tau(t)|Q(e^{it})|.
\]
We are therefore led, within the class of positive decreasing densities, to the continuum extremal problem
\[
 \inf_{\tau,Q}
 \sup_{0<t<2\pi}\omega_\tau(t)|Q(e^{it})|,
\]
where $\tau$ ranges over positive decreasing probability densities
with respect to $dt/(2\pi)$ and $Q$ over monic polynomials. We first optimize the density by deriving a
lower bound independent of $Q$; the polynomial factor is optimized in
the next subsection. Indeed, Jensen's formula gives, for every monic $Q$,
\[
 \frac1{2\pi}\int_0^{2\pi}\log|Q(e^{it})|\,dt\ge0.
\]
Hence
\begin{align*}
 \log\sup_t {\omega_\tau}(t)|Q(e^{it})|
 &\ge \frac1{2\pi}\int_0^{2\pi}
       \log\bigl({\omega_\tau}(t)|Q(e^{it})|\bigr)\,dt\\
 &\ge \frac1{2\pi}\int_0^{2\pi}\log {\omega_\tau}(t)\,dt.
\end{align*}
Thus the logarithmic mean of $\omega_\tau$ gives a lower bound for
the logarithm of the weighted maximum, uniformly over all monic
polynomials $Q$. The following proposition shows that this logarithmic
mean is at least $-1/2$ and identifies the unique equality case.

\begin{proposition}\label{prop:log-law}
Assume that $\tau\in C^1((0,2\pi])$ satisfies
\[
 \tau(t)>0,\qquad \tau'(t)<0,\qquad0<t<2\pi,
\]
\[
 \frac1{2\pi}\int_0^{2\pi}\tau(t)\,dt=1,
\]
and that $\log(-\tau')\in L^1(0,2\pi)$. Then
\[
 \frac1{2\pi}\int_0^{2\pi}\log {\omega_\tau}(t)\,dt\ge-\frac12.
\]
Equality holds only for
\[
 \tau(t)=\log\frac{2\pi}{t}.
\]
\end{proposition}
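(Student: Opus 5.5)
Setting $g(t)=-\tau'(t)>0$ and $\log\omega_\tau=-\tfrac12\log(2\pi)-\tfrac12\log g$, the claimed inequality is equivalent to
\[
 \frac1{2\pi}\int_0^{2\pi}\log g(t)\,dt\le 1-\log(2\pi).
\]
The plan is to prove this upper bound on the logarithmic mean of $g$ using Jensen's inequality against a well-chosen probability weight, with the equality case dictating the weight. The relevant constraint comes from the normalization of $\tau$. Since $\tau$ is decreasing and positive, $\tau(2\pi)\ge0$. Integrating by parts, $\int_0^{2\pi}\tau\,dt=[t\tau(t)]_0^{2\pi}+\int_0^{2\pi}t\,g(t)\,dt$. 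We need $t\tau(t)\to0$ as $t\to0^+$. This follows from monotonicity and integrability: $t\tau(t)\le 2\int_{t/2}^{t}\tau\to0$. Hence
\[
 \frac1{2\pi}\int_0^{2\pi}t\,g(t)\,dt=1-\tau(2\pi)\le1.
\]
This gives the first-moment constraint $\int_0^{2\pi}t\,g(t)\,dt\le 2\pi$.

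Next we apply Jensen's inequality with respect to the uniform probability measure $dt/(2\pi)$ to the function $t\,g(t)$:
\[
 \frac1{2\pi}\int\log(t g)\,dt\le\log\Bigl(\frac1{2\pi}\int t g\,dt\Bigr)\le\log 1=0.
\]
Then
\[
 \frac1{2\pi}\int\log g=\frac1{2\pi}\int\log(tg)-\frac1{2\pi}\int_0^{2\pi}\log t\,dt\le-(\log(2\pi)-1)=1-\log(2\pi),
\]
which is exactly the required bound. Integrability is not a problem: $\log g\in L^1$ by hypothesis and $\log t\in L^1(0,2\pi)$, so $\log(tg)\in L^1$ and Jensen applies. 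If the integral of $tg$ were infinite, it would contradict the bound already obtained.

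For the equality case, both inequalities must be equalities. First, $\tau(2\pi)=0$. Second, equality in Jensen forces $t\,g(t)$ to be a.e. constant; by continuity it is constant, say $c$, and the moment identity gives $c=1$. So $\tau'(t)=-1/t$ with $\tau(2\pi)=0$, which yields $\tau(t)=\log(2\pi/t)$. One checks directly that this $\tau$ satisfies all the hypotheses, including the normalization $\frac1{2\pi}\int_0^{2\pi}\log(2\pi/t)\,dt=1$.

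The main obstacle is the boundary behaviour at $t=0$, where $\tau$ may blow up, as it does in the extremal case. The integration by parts should be done on $[\varepsilon,2\pi]$ and then passed to the limit, using the bound $\varepsilon\tau(\varepsilon)\le2\int_{\varepsilon/2}^{\varepsilon}\tau\to0$ together with monotone convergence for $\int t g$, since $tg\ge0$.
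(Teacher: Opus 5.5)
Your proof is correct and matches the paper's essentially step by step. Both use the integration by parts on $[\varepsilon,2\pi]$, with $t\tau(t)\to0$ obtained from $t\tau(t)\le 2\int_{t/2}^{t}\tau$, to get $\frac1{2\pi}\int_0^{2\pi}t\,g(t)\,dt=1-\tau(2\pi)$, then apply Jensen to $t\,g(t)$ and subtract the mean $\log(2\pi)-1$ of $\log t$; the equality analysis is the same, forcing $\tau(2\pi)=0$ and $t\,g(t)\equiv1$.
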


\begin{proof}
Put $p=-\tau'>0$ and $c=\tau(2\pi)$. Continuity at $2\pi$ and positivity on
$(0,2\pi)$ give $c\ge0$, while strict decrease and the normalization imply
$c<1$: if $c\ge1$, then $\tau(t)>1$ for $0<t<2\pi$, contrary to the
normalization. Moreover, positivity, monotonicity and integrability imply
\[
 t\tau(t)\longrightarrow0,\qquad t{\to}0,
\]
because
\[
 \frac t2\tau(t)
 \le \int_{t/2}^{t}\tau(s)\,ds
 \le \int_0^t\tau(s)\,ds\longrightarrow0.
\]
We may therefore integrate by parts on $[\varepsilon,2\pi]$ and let
$\varepsilon{\to}0$, obtaining
\begin{align*}
 \frac1{2\pi}\int_0^{2\pi}t p(t)\,dt
 &=\frac1{2\pi}\left([-t\tau(t)]_0^{2\pi}
          +\int_0^{2\pi}\tau(t)\,dt\right)\\
 &=1-c.
\end{align*}
Jensen's inequality applied to the positive function $tp(t)$
gives
\[
 \frac1{2\pi}\int_0^{2\pi}\log(tp(t))\,dt\le\log(1-c).
\]
Also,
\[
 \frac1{2\pi}\int_0^{2\pi}\log t\,dt=\log(2\pi)-1.
\]
Since
\[
 \log {\omega_\tau}(t)=-\frac12\log(2\pi)-\frac12\log p(t),
\]
we obtain
\[
 \frac1{2\pi}\int_0^{2\pi}\log {\omega_\tau}(t)\,dt
 \ge-\frac12-\frac12\log(1-c)\ge-\frac12.
\]
For equality, the second inequality first forces $c=0$. Equality in Jensen
then forces $tp(t)$ to be constant. Its mean is $1$, so $tp(t)=1$ and
$p(t)=1/t$. Since $p=-\tau'$ and $\tau(2\pi)=0$, this gives
\[
 \tau(t)=\int_t^{2\pi}\frac{du}{u}=\log\frac{2\pi}{t}.
\]
This function satisfies all the assumptions and gives equality.
\end{proof}

Combining the proposition with the preceding inequality gives
\[
 \sup_{0<t<2\pi}\omega_\tau(t)|Q(e^{it})|
 \geq e^{-1/2}
\]
for every admissible density $\tau$ and every monic polynomial $Q$.
Moreover, the density-dependent lower bound is sharp only for $\tau(t)=\log\frac{2\pi}{t}$.
For this density, the corresponding probability measure and weight are
\[
 d\nu(t)=\log\frac{2\pi}{t}\,\frac{dt}{2\pi},
 \qquad
 \omega_\tau(t)=\sqrt{\frac{t}{2\pi}}.
\]
It remains to show that, for this weight, the lower bound $e^{-1/2}$
can be approached by a suitable monic polynomial $Q$. This is the
subject of the next subsection.

\subsection{Optimization of the polynomial factor}
\label{sec:polynomial-factor}

For the logarithmic density found in
Proposition~\ref{prop:log-law}, the corresponding weight is $\omega_\tau(t)=\sqrt{\frac{t}{2\pi}}$.
We now optimize the monic polynomial factor $Q$ for this weight. To
control the contributions near $t=0$ and $t=2\pi$ in the subsequent
uniform coefficient estimate, we require $Q$ to have a zero of fixed
order at $1$. For a polynomial $Q$, set
\[
 C(Q)=\max_{0\leq t\leq2\pi}
 \sqrt{\frac{t}{2\pi}}\,|Q(e^{it})|.
\]
The following theorem determines the sharp value of $C(Q)$ over this
class of monic polynomials and shows that imposing the prescribed
vanishing at $1$ does not alter the extremal value. This extremal problem is a weighted Chebyshev problem on the unit circle. The result in Theorem~\ref{thm:Q} also follows from H.~Widom's general asymptotic formula~\cite[Theorem~8.3]{Widom69}. More generally, Widom studies, for each fixed degree, the minimum of $\max_E \rho|P|$ over monic polynomials $P$, where $E$ may be a finite union of sufficiently smooth Jordan curves; here $E=\mathbb T$. We give a short elementary proof adapted to this case.

\begin{theorem}\label{thm:Q}
For every integer $L\ge1$,
\[
 \inf_{\substack{Q\ \mathrm{monic}\\(z-1)^L\mid Q}} C(Q)
 =e^{-1/2}.
\]
\end{theorem}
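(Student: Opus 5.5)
The proof has two halves: a Jensen-type lower bound valid for every monic $Q$, and an upper bound obtained by building an almost extremal $Q$ from an outer function, then approximating it by polynomials and reversing. The factor $(z-1)^L$ is built in during the construction.

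\emph{Lower bound.} This is the computation already carried out before Proposition~\ref{prop:log-law}, specialised to $\omega_\tau(t)=\sqrt{t/2\pi}$. For any monic $Q$, divisible by $(z-1)^L$ or not, Jensen's formula gives $\frac1{2\pi}\int_0^{2\pi}\log|Q(e^{it})|\,dt\ge0$. Therefore
\[
 \log C(Q)\ge\frac1{2\pi}\int_0^{2\pi}\tfrac12\log\tfrac{t}{2\pi}\,dt
 =\tfrac12\bigl(\log 2\pi-1-\log 2\pi\bigr)=-\tfrac12 .
\]
So the infimum is at least $e^{-1/2}$.

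\emph{Reduction for the upper bound.} Reversal preserves modulus on $\T$. If $R$ is a polynomial of degree $m$ with $R(0)\neq0$, then $Q(z)=z^m\overline{R(1/\bar z)}/\overline{R(0)}$ is monic and $|Q|=|R|/|R(0)|$ on $\T$. Since $1=1/\bar 1$, a zero of order $L$ of $R$ at $1$ gives a zero of order $L$ of $Q$ at $1$. It therefore suffices, for each $\eta>0$, to find a polynomial $R=(1-z)^LP$ such that
\[
 |R(e^{it})|\le(1+\eta)\,g(e^{it})
 \quad\text{and}\quad
 |R(0)|\ge(1-\eta)\,e^{1/2-\eta},
\]
where $g$ is a continuous positive function on $\T$ with $g\le 1/\omega_\tau$ (we write $1/\omega_\tau=\sqrt{2\pi/t}$). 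Then $\omega_\tau|Q|\le(1+\eta)\,e^{-1/2+\eta}/(1-\eta)$, which tends to $e^{-1/2}$ as $\eta\to0$.

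\emph{Construction.} The main obstacle is that $1/\omega_\tau$ is unbounded near $t=0$ and has a jump at the point $1\in\T$ (value $\approx1$ as $t\to2\pi^-$, large as $t\to0^+$). On top of this, $(1-z)^L$ must be absorbed without losing the constant. I would regularise in three steps.

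1. Choose a smooth positive $g$ on $\T$ with $g\le 1/\omega_\tau$. To do this, truncate $1/\omega_\tau$ at a height $M$ and then modify it slightly from below in a small arc to the right of $1$, so that it joins continuously to the value $\approx1$ at $t=2\pi^-$. By monotone convergence, $\frac1{2\pi}\int\log g$ is at least $\tfrac12-\eta$ when $M$ is large and the arc is small.

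2. For small $\delta>0$, set $g_\delta=g/\max(|1-e^{it}|,\delta)^L$, which is smoothed slightly; it is smooth and positive. Since $\frac1{2\pi}\int\log|1-e^{it}|\,dt=0$, we get $\frac1{2\pi}\int\log g_\delta\to\frac1{2\pi}\int\log g$ as $\delta\to0$.

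3. Let $F$ be the outer function with $|F|=g_\delta$ on $\T$. Because $\log g_\delta$ is smooth, $F$ is continuous and zero-free on $\overline{\D}$ and lies in the disk algebra. Its value at the origin satisfies $|F(0)|=\exp\bigl(\frac1{2\pi}\int\log g_\delta\bigr)\ge e^{1/2-2\eta}$. Fejér means $P$ of $F$ converge uniformly on $\overline{\D}$, and $g_\delta$ is bounded below, so $|P|\le(1+\eta)g_\delta$ on $\T$ and $|P(0)-F(0)|$ is small.

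Finally put $R=(1-z)^LP$. Then $R(0)=P(0)$, and on $\T$
\[
 |R|\le(1+\eta)\,g\,\frac{|1-e^{it}|^L}{\max(|1-e^{it}|,\delta)^L}\le(1+\eta)\,g .
\]
Reversing $R$ as above gives the required monic $Q$, divisible by $(z-1)^L$, with $C(Q)\le e^{-1/2}+o(1)$. The delicate points to verify are the regularity needed for $F$ to lie in the disk algebra (smoothness of $\log g_\delta$ is enough) and that the modification near $t=0^+$ costs only $o(1)$ in the logarithmic mean.
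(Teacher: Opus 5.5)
Your proof is correct and shares the paper's overall architecture: Jensen for the lower bound, and for the upper bound an analytic function whose boundary modulus nearly saturates Jensen, approximated by polynomials and then reversed. The implementation differs in two ways.

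First, the approximation step. The paper avoids outer functions with a general log-modulus. It approximates the truncated target $\min(v,M)$, minus its mean, by a real trigonometric polynomial $u$, writes $u=\operatorname{Re}H$ with $H$ an analytic polynomial and $H(0)=0$, and approximates $e^H$ by the explicit polynomials $(1+H/m)^m$. This needs no facts about conjugate functions. You instead take the outer function $F$ with $|F|=g_\delta$ and use its Fej\'er means. That requires the standard fact that a smooth (even H\"older) log-modulus gives a zero-free disk-algebra function, but it gives $P(0)=F(0)$ exactly.

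Second, the factor $(z-1)^L$. The paper absorbs it into the weight $\Omega_L(t)=\sqrt{t/2\pi}\,|e^{it}-1|^L$, whose logarithmic mean is still $-1/2$. Since $v=\log(\Gamma/\Omega_L)\to+\infty$ at both ends of $(0,2\pi)$, the truncation is automatically continuous, so the jump of $1/\omega_\tau$ at $1$ and the zero of order $L$ are handled at once. You treat them separately: the arc modification of $g$, then division by a regularized $\max(|1-e^{it}|,\delta)^L$. This works provided the regularized denominator stays $\ge|1-e^{it}|$, so that $|R|\le(1+\eta)g$ survives. The choice $\sqrt{|1-e^{it}|^2+\delta^2}$ satisfies this and is already smooth. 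Merging your Steps 1 and 2 into a single continuous minorant of $1/(\omega_\tau|1-e^{it}|^L)$ would remove the arc modification entirely; that is exactly the paper's construction in multiplicative form.

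A minor bookkeeping point: your constants give $|R(0)|\ge e^{1/2-2\eta}$ rather than the $(1-\eta)e^{1/2-\eta}$ required in your reduction. This is harmless.
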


\begin{proof}
\smallskip\noindent\emph{Lower bound.}
For every monic polynomial $Q$, Jensen's formula gives
\[
 \frac1{2\pi}\int_0^{2\pi}\log|Q(e^{it})|\,dt\geq0.
\]
Since
\[
 \frac1{2\pi}\int_0^{2\pi}
 \log\sqrt{\frac{t}{2\pi}}\,dt=-\frac12,
\]
we obtain
\[
 \log C(Q)
 \geq
 \frac1{2\pi}\int_0^{2\pi}
 \log\!\left(\sqrt{\frac{t}{2\pi}}\,|Q(e^{it})|\right)\,dt
 \geq-\frac12
\]
and consequently $C(Q)\geq e^{-1/2}$.

\smallskip\noindent\emph{Upper bound: Construction of almost optimal polynomials.}
For the reverse inequality, fix $\Gamma>e^{-1/2}$. We construct a monic
polynomial $Q$, divisible by $(z-1)^L$, such that $C(Q)<\Gamma$. Put
\[
 {\Omega_L(t)=\sqrt{\frac{t}{2\pi}}\,|e^{it}-1|^L}.
\]
The logarithmic mean of $\Omega_L$ is $-1/2$. Indeed, the first factor has mean
$-1/2$, while
\[
 \frac1{2\pi}\int_0^{2\pi}\log|e^{it}-1|\,dt=0.
\]
Hence, for $0<t<2\pi$,
\[
 v(t)=\log\frac{\Gamma}{\Omega_L(t)}
\]
has mean
\[
 \frac1{2\pi}\int_0^{2\pi}v(t)\,dt
 =\log\Gamma+\frac12>0.
\]
The positivity of the mean of $v$ is the key point of the construction.
We seek a polynomial $q$ with $q(0)=1$ such that
\[
 \log|q(e^{it})|<v(t),\qquad 0<t<2\pi,
\]
for then
\[
 \Omega_L(t)|q(e^{it})|<\Gamma.
\]
We construct such a polynomial in three steps. First, we find an
analytic polynomial $H$ with $H(0)=0$ whose real part lies strictly
below $v$ on $\T$. We then approximate $e^H$ by polynomials $q_m$
satisfying $q_m(0)=1$. Finally, we reverse $q_m$ to obtain the required
monic polynomial.

\smallskip\noindent\emph{Step 1: Construction of an analytic logarithmic minorant.}
Since $v(t)\to+\infty$ at both endpoints, for $M>0$ define
\[
 v_M(t)=\min(v(t),M),\qquad 0<t<2\pi,
\]
and set $v_M(0)=v_M(2\pi)=M$. Then $v_M$ is continuous and $2\pi$-periodic.
Moreover, $v_M(t)\to v(t)$ for almost every $t$ as $M\to\infty$.
Since $v$ has only logarithmic endpoint singularities, $v\in L^1(0,2\pi)$,
and dominated convergence gives
\[
 \frac1{2\pi}\int_0^{2\pi}v_M(t)\,dt
 \longrightarrow
 \frac1{2\pi}\int_0^{2\pi}v(t)\,dt>0.
\]
Hence, for $M$ large enough,
\[
 a_0=\frac1{2\pi}\int_0^{2\pi}v_M(t)\,dt>0.
\]

Put
\[
 u_0=v_M-a_0.
\]
Then $u_0$ is continuous, periodic, and has mean zero. Its Fej\'er sums
converge uniformly to $u_0$ and have the same mean. We may therefore choose a
real trigonometric polynomial $u$ of mean zero such that
\[
 \|u-u_0\|_\infty<\frac{a_0}{4}.
\]
Since $v_M\le v$, this gives
\begin{equation}\label{eq:u-below-v}
 u(t)\le v(t)-\frac{3a_0}{4}.
\end{equation}
Because $u$ is real and has mean zero, it is the real part of an analytic
polynomial with zero constant term. Write
\[
 u(t)=2\operatorname{Re}\sum_{j=1}^N c_je^{ijt},
 \qquad
 H(z)=2\sum_{j=1}^Nc_jz^j.
\]
Then $H(0)=0$ and
\[
 \operatorname{Re}H(e^{it})=u(t).
\]

\smallskip\noindent\emph{Step 2: Polynomial approximation of the exponential.}
The analytic function $e^H$ has value $1$ at the origin and, by
\eqref{eq:u-below-v},
\[
 \log|e^{H(e^{it})}|=u(t)
 \leq v(t)-\frac{3a_0}{4}.
\]
Thus $e^H$ has the desired boundary modulus, but is not in general a
polynomial. We approximate it by setting, for a positive integer $m$,
\[
 q_m(z)=\left(1+\frac{H(z)}{m}\right)^m.
\]
Then $q_m$ is a polynomial and, since $H(0)=0$, $q_m(0)=1$. Let
\[
 M_H=\max_{|z|\leq1}|H(z)|.
\]
For $m>2M_H$, the Taylor expansion of the principal logarithm gives,
with a numerical constant $C$,
\[
 \biggl|m\log\left(1+\frac{H(z)}m\right)-H(z)\biggr|
 \leq \frac{CM_H^2}{m},
 \qquad |z|\leq1.
\]
Taking real parts on the unit circle gives
\[
 \bigl|\log|q_m(e^{it})|-u(t)\bigr|
 \le \frac{CM_H^2}{m},
 \qquad 0\le t\le2\pi.
\]
Choose $m$ so large that $CM_H^2/m\le a_0/4$. By
\eqref{eq:u-below-v},
\[
 \log|q_m(e^{it})|
 \le v(t)-\frac{a_0}{2},
 \qquad 0<t<2\pi.
\]
Therefore
\begin{equation}\label{eq:qm-weight}
 \Omega_L(t)|q_m(e^{it})|
 \le \Gamma e^{-a_0/2}<\Gamma,
 \qquad 0\le t\le2\pi,
\end{equation}
where the inequality at the endpoints is automatic because $\Omega_L(0)=\Omega_L(2\pi)=0$.

\smallskip\noindent\emph{Step 3: Restoration of monicity.} To obtain a monic polynomial define the reversed polynomial
\[
 {q_m^*(z)=z^{\deg q_m}\overline{q_m(1/\overline z)}}.
\]
Since $q_m(0)=1$, the leading coefficient of $q_m^*$ is $1$, so $q_m^*$ is
monic. Moreover, for $|z|=1$,
\[
 {q_m^*(z)=z^{\deg q_m}\overline{q_m(z)}},
 \qquad
 |q_m^*(z)|=|q_m(z)|.
\]
Thus
\[
 {Q(z)=(z-1)^Lq_m^*(z)}
\]
is monic, is divisible by $(z-1)^L$, and, by \eqref{eq:qm-weight}, satisfies $C(Q)<\Gamma$. Since $\Gamma>e^{-1/2}$ was arbitrary, the reverse inequality follows.

\end{proof}
Together with Proposition~\ref{prop:log-law}, Theorem~\ref{thm:Q}
therefore shows that the continuum extremal problem considered above
has sharp value $e^{-1/2}$.

\subsection{Discretization and the sharp coefficient asymptotics}\label{sec:eigenvalue-distribution}
We now discretize the optimal density to construct the zeros of the finite Blaschke product $B_n$. We also fix
\[
 r_n=1-n^{-1/6}.
\]
After the change of variable $u=t/(2\pi)$, this density is $-\log u$. Set
\[
 M(u)=u(1-\log u),\qquad 0<u\le1,\qquad M(0)=0.
\]
Then $M(u)=\nu([0,2\pi u])$ is the distribution function of $\nu$. Since $M'(u)=-\log u>0$ on $(0,1)$, $M$ is a strictly increasing bijection from $[0,1]$ onto $[0,1]$. Define
\[
 q_{j,n}=M^{-1}(j/n),\qquad 0\le j\le n,
\]
and
\[
 I_{j,n}=[2\pi q_{j-1,n},2\pi q_{j,n}].
\]
Then $\nu(I_{j,n})=1/n$. We choose
\[
 \theta_{j,n}=2\pi M^{-1}\!\left(\frac{j-\frac12}{n}\right),
 \qquad
 \lambda_{j,n}=r_ne^{i\theta_{j,n}},
 \qquad 1\le j\le n.
\]
The point $\theta_{j,n}$ is the image under $2\pi M^{-1}$ of the midpoint of $[(j-1)/n,j/n]$, and hence
\[
 2\pi q_{j-1,n}<\theta_{j,n}<2\pi q_{j,n}.
\]
Thus the angles, and therefore the points $\lambda_{j,n}$, are distinct.
Since $\theta_{j,n}\in I_{j,n}$ and the uniform continuity of $M^{-1}$ gives
\[
 \max_{1\leq j\leq n}|I_{j,n}|\to0,
\]
the empirical measures $\mu_n$ converge weakly to $\nu$; that is,
\[
 \int_0^{2\pi} g(t)\,d\mu_n(t)\longrightarrow
 \int_0^{2\pi} g(t)\,d\nu(t)
\]
for every continuous $2\pi$-periodic function $g$.
Set
\[
 \delta_n=n^{-1/32}, \qquad J_n=[\delta_n,2\pi-\delta_n].
\]
For $r_n=1-n^{-1/6}$ we prove in Appendix~\ref{sec:coeff-proof} that
\[
 \sup_{t\in J_n}
 \bigl|\tau_n^{(j)}(t)-\tau^{(j)}(t)\bigr|\to0,
 \qquad j=0,1,2.
\]

The following proposition identifies the asymptotic size of the largest
Taylor coefficient of $QB_n$.

\begin{proposition}\label{prop:coeff-norm}
Let $L=16$, and let $Q\not\equiv0$ be fixed with $(z-1)^L\mid Q$. Then
\begin{equation}\label{eq:coeff-norm}
 \sqrt n\,\|QB_n\|_{{l_\infty^A}}\longrightarrow C(Q).
\end{equation}
\end{proposition}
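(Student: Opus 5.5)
We must show that $\sqrt n\,\sup_k|\widehat{QB_n}(k)|\to C(Q)$. We prove an upper bound $\limsup\le C(Q)$, uniform in $k$, and a lower bound $\liminf\ge C(Q)$ along a well-chosen sequence $k_n$. Throughout, write
\[
 \widehat{QB_n}(k)=\frac1{2\pi}\int_0^{2\pi}Q(e^{it})e^{in\psi_{n,a}(t)}\,dt,\qquad a=k/n,
\]
and use the estimate quoted above:
\[
 \sup_{t\in J_n}\bigl|\tau_n^{(j)}(t)-\tau^{(j)}(t)\bigr|\to0,\qquad j=0,1,2,\qquad \tau(t)=\log\frac{2\pi}{t}.
\]

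\emph{Lower bound.} Since $C(Q)$ is a maximum of a continuous function vanishing at $0$ and $2\pi$, it is attained at some $t_*\in(0,2\pi)$. Put $a_*=\tau(t_*)$ and $k_n=\lfloor na_*\rfloor$. By the uniform convergence of $\tau_n$ and $\tau_n'$ on a compact neighbourhood of $t_*$, together with $\tau'=-1/t<0$, the stationary point $t_{n,a_n}$ exists, is unique near $t_*$, and tends to $t_*$. We localize with a smooth cutoff. On the main window, standard stationary phase with error control from the uniform $C^2$ convergence gives the main term
\[
 \frac{|Q(e^{it_*})|}{\sqrt{2\pi n/t_*}}=\frac{C(Q)}{\sqrt n}.
\]
Away from the window, $|\psi'|=|\tau_n-a_n|$ is bounded below. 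Integration by parts, or van der Corput's first-derivative test (monotonicity of $\tau_n-a_n$ on $J_n$ follows from $\tau_n'\approx\tau'<0$), bounds that contribution by $O(1/n)$ on $J_n$. The pieces $[0,\delta_n]$ and $[2\pi-\delta_n,2\pi]$ contribute trivially at most $\delta_n\max|Q|$. This is not yet $o(n^{-1/2})$, but $(z-1)^L\mid Q$ gives $|Q(e^{it})|\lesssim t^L$, so these pieces contribute $O(\delta_n^{L+1})=O(n^{-17/32})=o(n^{-1/2})$. This is exactly why $L=16$ is chosen.

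\emph{Upper bound, uniform in $k$.} This is the main obstacle. We split into cases according to $a=k/n$.

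If $a$ is large, say $a>\sup_{J_n}\tau_n+\eta$, or $a<\inf_{J_n}\tau_n-\eta$ (which includes $k$ near $0$ since $\tau(2\pi)=0$, and also $k\ge$ some multiple of $n$), there is no stationary point in $J_n$. Then van der Corput's first-derivative test bounds the $J_n$ part by $O(1/(n\eta))$, and the endpoint pieces are handled by the vanishing of $Q$ as before.

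The delicate case is when the stationary point approaches $\delta_n$ or $2\pi-\delta_n$, that is $a\approx\tau(\delta_n)\approx\tfrac1{32}\log n$, or $a$ small. Here the second derivative $|\tau'(t)|=1/t$ is of size $1/\delta_n$ near $\delta_n$ and about $1/(2\pi)$ near $2\pi$. We apply the van der Corput second-derivative test on dyadic-type subintervals, with amplitude weight $|Q|\lesssim t^L$ near $0$ and $\lesssim(2\pi-t)^L$ near $2\pi$, using integration by parts for the amplitude via its total variation. Near $t=0$, the bound on a piece of size $t$ is
\[
 \frac{t^L}{\sqrt{n/t}},
\]
which is $\le(1+o(1))\,C(Q)/\sqrt n$ only after comparison with the weight $\sqrt{t/2\pi}\,|Q(e^{it})|$. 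To obtain the sharp constant, rather than merely $O(1/\sqrt n)$, we use the full stationary-phase expansion whenever the stationary point $t_{n,a}$ lies in $J_n$. The uniform $C^2$ control makes the remainder $o(n^{-1/2})$ uniformly in such $a$, and the main term is
\[
 \frac{\omega_\tau(t_{n,a})\,|Q(e^{it_{n,a}})|}{\sqrt n}\,(1+o(1))\le\frac{C(Q)+o(1)}{\sqrt n}.
\]
Uniformity needs the error term in stationary phase to depend only on bounds for $\tau_n''$ and a lower bound for $|\tau_n'|\ge 1/(2\pi)-o(1)$ on $J_n$; the factor $1/\delta_n^2$ in $\tau''$ near $0$ is offset by the $t^L$ decay of $Q$.

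When $t_{n,a}$ falls outside $J_n$, the second-derivative test bounds the contribution by $n^{-1/2}\sup_{t\le2\delta_n}t^{L}\sqrt{t}=o(n^{-1/2})$, and similarly near $2\pi$. Inside the endpoint layers we only need crude bounds on $\tau_n$ (positivity, and $\tau_n'<0$ near $0$ from the Poisson representation), since $|Q|\lesssim\delta_n^L$ kills everything.

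Combining the cases yields $\limsup\sqrt n\,\|QB_n\|_{l_\infty^A}\le C(Q)$, which together with the lower bound proves the proposition.
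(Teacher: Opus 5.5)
Your lower bound is the paper's argument: a maximizer $t_*$, $k_n\approx na_*$, stationary phase at an interior critical point, the first-derivative test elsewhere on $J_n$, and the trivial $O(\delta_n^{L+1})=o(n^{-1/2})$ bound on the end layers. Your upper bound takes a different route.

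\emph{How the paper does the upper bound.} The paper applies stationary phase only when the critical point lies in a fixed interval $[2\varepsilon,2\pi-2\varepsilon]$, where all constants depend on $\varepsilon$ alone. When the critical point is within $2\varepsilon$ of an end of $J_n$, or outside $J_n$, it uses only crude estimates. On $[\delta_n,4\varepsilon]$ or $[2\pi-4\varepsilon,2\pi-\delta_n]$, where $|Q(e^{it})|\le C_Q\varepsilon^L$, it applies the second-derivative van der Corput bound. On the rest of $J_n$ it uses the first-derivative test. This gives $\limsup_n\sqrt n\,\|QB_n\|_{l_\infty^A}\le\max\{C(Q),K_Q\varepsilon^L\}$, and then $\varepsilon\to0$.

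\emph{How your route compares.} You instead run stationary phase uniformly for every critical point in $J_n$. This yields the sharp main term $\sqrt{t_0/(2\pi)}\,|Q(e^{it_0})|+o(1)\le C(Q)+o(1)$ directly and avoids the double limit, but it needs bookkeeping you only assert. It does go through:
\begin{itemize}
\item Take the window $\rho_n=n^{-2/5}$. Since $|\psi_{n,a}'''|=|\tau_n''|\le2\delta_n^{-2}=2n^{1/16}$ on $J_n$, the cubic Taylor error is $O(n^{1+1/16}\rho_n^4)=O(n^{-43/80})=o(n^{-1/2})$, even before using the decay $|Q(e^{it_0})|\lesssim t_0^L$ you mention.
\item The Fresnel tails and the two side integrals are $O(1/(n\rho_n))$, because $-\tau_n'\ge1/(4\pi)$ on $J_n$.
\item The prefactor is uniform because $|t(-\tau_n'(t))-1|\le2\pi\sup_{J_n}|\tau_n'-\tau'|$ on $J_n$.
\item Windows protruding from $J_n$ occur only where $|Q|\lesssim\delta_n^L$, so they are negligible.
\end{itemize}
So your version trades the paper's limit in $\varepsilon$ for uniform third-derivative control up to the ends of $J_n$. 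The paper, by contrast, needs only the two-sided bound on $-\tau_n'$ near the ends.

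\emph{Two incorrect side remarks.} Neither is load-bearing.
\begin{itemize}
\item For fixed $\eta>0$, the case $a<\inf_{J_n}\tau_n-\eta$ is empty once $\tau_n(2\pi-\delta_n)<\eta$. So small $k$, including $k=0$, actually fall into your case of a critical point to the right of $J_n$. There $\psi_{n,a}'(t)\ge(2\pi-\delta_n-t)/(4\pi)$ gives only $\psi_{n,a}'\gtrsim\delta_n$ on $[\delta_n,2\pi-2\delta_n]$. The resulting bound $O(1/(n\delta_n))=O(n^{-31/32})$ is still negligible, but the case is not symmetric with the left end.
\item The claim that $\tau_n'<0$ near $0$ is false. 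The logarithmic mass sits just to the right of $0$, while the density vanishes at $2\pi^-$. So the Poisson smoothing makes $\tau_n$ increase across $t=0$; indeed $\tau_n'(0)>0$ for large $n$. You need no phase information in the end layers anyway: the trivial bound $O(\delta_n^{L+1})$ suffices there, as you already used in the lower bound.
\end{itemize}
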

The proof is given in Appendix~\ref{sec:coeff-proof}. With
$\delta_n=n^{-1/32}$, negligibility of the endpoint contribution
requires
\[
 \sqrt n\,\delta_n^{L+1}\to0,
\]
which is equivalent to $L>15$. Thus $L=16$ is the smallest admissible
integer for this choice of $\delta_n$.
\begin{corollary}\label{cor:Qeta}
For every $\eta>0$, there is a fixed monic polynomial $Q_\eta$, divisible by
$(z-1)^{16}$, such that
\[
 \|Q_\eta B_n\|_{{l_\infty^A}}
 \le \frac{1+\eta}{\sqrt{en}}
\]
for all sufficiently large $n$.
\end{corollary}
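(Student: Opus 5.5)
The corollary follows by combining Theorem~\ref{thm:Q} with Proposition~\ref{prop:coeff-norm}, applied to a single polynomial chosen once and for all.

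Fix $\eta>0$ and apply Theorem~\ref{thm:Q} with $L=16$. Since the infimum of $C(Q)$ over monic $Q$ divisible by $(z-1)^{16}$ equals $e^{-1/2}$, there is such a monic $Q_\eta$ with
\[
 C(Q_\eta)<e^{-1/2}\Bigl(1+\frac{\eta}{2}\Bigr).
\]
The polynomial $Q_\eta$ depends only on $\eta$ and not on $n$. It is nonzero and divisible by $(z-1)^{16}$, so it meets the hypotheses of Proposition~\ref{prop:coeff-norm}.

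Proposition~\ref{prop:coeff-norm} then gives
\[
 \sqrt n\,\|Q_\eta B_n\|_{l_\infty^A}\longrightarrow C(Q_\eta).
\]
Since $C(Q_\eta)<e^{-1/2}(1+\eta)$ strictly, convergence yields an $n_0$ such that for all $n\ge n_0$,
\[
 \sqrt n\,\|Q_\eta B_n\|_{l_\infty^A}\le e^{-1/2}(1+\eta).
\]
Dividing by $\sqrt n$ gives
\[
 \|Q_\eta B_n\|_{l_\infty^A}\le \frac{1+\eta}{\sqrt{en}}.
\]

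The only point needing care is the order of choices. $Q_\eta$ must be fixed before letting $n\to\infty$, because the convergence in Proposition~\ref{prop:coeff-norm} is stated for a fixed $Q$. Choosing the strict margin $1+\eta/2$ below the target $1+\eta$ is what turns that limit into an inequality valid for all large $n$. There is no real obstacle: all analytic difficulty is contained in the deferred proof of Proposition~\ref{prop:coeff-norm}.
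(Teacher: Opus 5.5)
Your proof is correct and is exactly the paper's argument. You choose $Q_\eta$ from Theorem~\ref{thm:Q} with $L=16$ so that $C(Q_\eta)<(1+\eta/2)e^{-1/2}$, fix it before letting $n\to\infty$, and use the strict margin to turn the limit in Proposition~\ref{prop:coeff-norm} into the bound for all large $n$.
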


\begin{proof}
Choose $Q_\eta$ from Theorem~\ref{thm:Q} so that
$C(Q_\eta)<(1+\eta/2)e^{-1/2}$, and apply
Proposition~\ref{prop:coeff-norm}.
\end{proof}

\subsection{Explicit matrices and completion of the proof}
\label{sec:explicit-matrices}

We now pass from the prescribed spectra constructed above to explicit
operators. The model-operator realization recalled in
Section~\ref{sec:duality} converts the upper bound of
Corollary~\ref{cor:Qeta} into a sequence of matrices asymptotically
attaining Sch\"affer's upper bound.

\begin{theorem}\label{thm:explicit-matrices}
For $n\ge2$, put $w_{j,n}=\overline{\lambda_{j,n}}$ and define the Malmquist--Walsh functions
\[
 e_{j,n}(z)=
 \frac{(1-|w_{j,n}|^2)^{1/2}}{1-\overline{w_{j,n}}z}
 \prod_{\ell=1}^{j-1}\frac{z-w_{\ell,n}}{1-\overline{w_{\ell,n}}z},
 \qquad 1\le j\le n,
\]
where the empty product is $1$. Equip $\C^n$ with the norm
\[
 \|x\|_*=
 \bigl\|\sum_{j=1}^n x_je_{j,n}\bigr\|_{{l_\infty^A}},
\]
and let $\|\cdot\|_*$ also denote the induced operator norm. Define the upper-triangular matrix
\[
 (T_n)_{ij}=
 \begin{cases}
 (1-r_n^2)\displaystyle\prod_{\ell=i+1}^{j-1}
        (-\overline{\lambda_{\ell,n}}),& i<j,\\[0.7em]
 \lambda_{i,n},& i=j,\\
 0,& i>j.
 \end{cases}
\]
Then the eigenvalues of $T_n$ are exactly the distinct points
$\lambda_{1,n},\ldots,\lambda_{n,n}$, one has $\|T_n\|_*\le1$, and
\[
 \frac{|\det T_n|\,\|T_n^{-1}\|_*}
      {\|T_n\|_*^{n-1}\sqrt n}
 \longrightarrow \sqrt e.
\]
\end{theorem}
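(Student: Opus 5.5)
The proof will use the model-operator realization from \cite[Lemma~7, Theorem~8 and Remark~12]{SZJMPA} together with Corollary~\ref{cor:Qeta} and Proposition~\ref{prop:holder-lower}.

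\textbf{Step 1: identify $T_n$ as a compressed shift.} Let $B^*_n$ be the Blaschke product with zeros $w_{j,n}=\overline{\lambda_{j,n}}$. The functions $e_{1,n},\ldots,e_{n,n}$ form the orthonormal Malmquist--Walsh basis of the model space $K_{B^*_n}=H^2\ominus B^*_n H^2$. A direct computation with the reproducing kernels shows that $T_n$ is the matrix, in this basis, of the model operator $M_{B_n}$. Concretely, this operator is multiplication by $z$ followed by the orthogonal projection onto $K_{B^*_n}$, or equivalently its adjoint-type version on the quotient $\W/B_n\W$ transported through the pairing $\langle\cdot,\cdot\rangle$. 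I would check the entries by evaluating $\langle P(z e_{j,n}), e_{i,n}\rangle$ using the recursion
\[
e_{j+1,n}=b_{w_{j,n}}e_{j,n}\cdot\frac{(1-|w_{j+1}|^2)^{1/2}(1-\overline{w_j}z)}{(1-|w_j|^2)^{1/2}(1-\overline{w_{j+1}}z)}.
\]
All $|w_\ell|=r_n$, and this is why the factors $(1-r_n^2)\prod(-\overline{\lambda_\ell})$ appear. Since $T_n$ is upper triangular, its eigenvalues are its diagonal entries $\lambda_{1,n},\ldots,\lambda_{n,n}$, which are distinct by construction.

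\textbf{Step 2: compute the norms.} The norm $\|x\|_*$ is the quotient norm of $l_\infty^A$ restricted to $K_{B^*_n}$, which is dual to the quotient norm of $\W/B_n\W$. Under this duality, $T_n$ corresponds to multiplication by $z$. By \cite[Theorem~8]{SZJMPA}:
\begin{itemize}
\item $\|T_n\|_*\le\|z\|_{\W}=1$;
\item $|\det T_n|\,\|T_n^{-1}\|_*$ equals the interpolation value $\varphi(\lambda_{1,n},\ldots,\lambda_{n,n})$, up to the additive term $|d|$, as made precise in \cite[Remark~12]{SZJMPA}.
\end{itemize}
The second point holds because $\det(T_n)T_n^{-1}=d\,T_n^{-1}$ is the functional calculus of $d/z$ modulo $B_n$, and its norm in the quotient algebra is the infimum of $\|h\|_{\W}$ over representatives. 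I would take these identifications from \cite{SZJMPA} rather than reprove them, verifying only that the basis change matches.

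\textbf{Step 3: asymptotics.} For the lower bound, fix $\eta>0$. Proposition~\ref{prop:holder-lower} with $Q=Q_\eta$, combined with Corollary~\ref{cor:Qeta}, gives
\[
\varphi\ge\frac{\sqrt{en}}{1+\eta}-r_n^n .
\]
Since $r_n^n=(1-n^{-1/6})^n\to0$, we get
\[
\liminf_n\frac{|\det T_n|\,\|T_n^{-1}\|_*}{\|T_n\|_*^{n-1}\sqrt n}\ge\frac{\sqrt e}{1+\eta},
\]
where we also used $\|T_n\|_*^{n-1}\le1$. For the upper bound, the ratio is at most $\mathcal S_n/\sqrt n\le\sqrt e$ by Schäffer's inequality. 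Letting $\eta\to0$ gives the limit $\sqrt e$.

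\textbf{Main obstacle.} The main difficulty is Step~2: checking that the explicit matrix and the norm $\|\cdot\|_*$ realize exactly, not merely up to constants, the duality between $\W/B_n\W$ and $l_\infty^A\cap K_{B_n^*}$, with the correct conjugations $w_{j,n}=\overline{\lambda_{j,n}}$. I would handle this by verifying $T_n^{\mathrm t}$ against the matrix of $f\mapsto zf \bmod B_n$ in the dual basis of reproducing kernels.
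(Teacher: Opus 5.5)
Your overall argument is the paper's. You take the lower bound $\varphi(\lambda_{1,n},\ldots,\lambda_{n,n})\ge\sqrt{en}/(1+\eta)-r_n^n$ from Proposition~\ref{prop:holder-lower} and Corollary~\ref{cor:Qeta}, import $\|T_n\|_*\le1$ and $|\det T_n|\,\|T_n^{-1}\|_*=\varphi$ from \cite{SZJMPA}, use $\|T_n\|_*^{n-1}\le1$, and apply Sch\"affer's inequality for the upper half. That skeleton is correct.

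The concrete error is in your Step~1: $T_n$ is not the matrix of ``multiplication by $z$ followed by the orthogonal projection onto $K_{B_n^*}$''. In the Malmquist--Walsh basis, that compressed shift has matrix $T_n^*$, which is lower triangular with diagonal $w_{j,n}=\overline{\lambda_{j,n}}$; indeed $\langle P(ze_{j,n}),e_{i,n}\rangle=\overline{(T_n)_{ji}}$. So your proposed entry check would not reproduce $T_n$. That picture also gives no handle on $\|T_n\|_*$, since the orthogonal projection onto $K_{B_n^*}$ is not an $l_\infty^A$-contraction.

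As the paper states, $T_n$ is the matrix of the backward shift $S^*f=(f-f(0))/z$ on the $S^*$-invariant space $K_{B_n^*}$, i.e.\ the Hilbert adjoint of the compressed shift. This gives you the three facts you need:
\begin{itemize}
\item $T_n$ is upper triangular, because $S^*$ preserves each $\operatorname{span}(e_{1,n},\ldots,e_{j,n})=K_{b_{w_{1,n}}\cdots b_{w_{j,n}}}$.
\item Its eigenvalues are $\overline{w_{j,n}}=\lambda_{j,n}$, because $S^*k_w=\overline w\,k_w$ for $k_w=1/(1-\overline wz)$.
\item $\|T_n\|_*\le1$ is immediate, because $S^*$ only deletes $\hat f(0)$ and shifts the remaining coefficients.
\end{itemize}

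This is also what makes your Step~2 duality correct. For the bilinear pairing $\sum_k\hat f(k)\hat g(k)$, the annihilator of $B_n\W$ is $K_{B_n^*}$. With the restricted (subspace, not quotient) $l_\infty^A$-norm, this annihilator is isometrically the dual of $\W/B_n\W$. Under this duality, $S^*$ is the transpose of multiplication by $z$. One then gets exactly $|\det T_n|\,\|T_n^{-1}\|_*=|d|\,\|[z]^{-1}\|_{\W/B_n\W}=\varphi(\lambda_{1,n},\ldots,\lambda_{n,n})$, with no additive $|d|$. The paper instead quotes $\varphi(w_{1,n},\ldots,w_{n,n})$ from \cite[Remark~12]{SZJMPA} and removes the conjugation using that $h\mapsto\overline{h(\bar z)}$ is an isometry of $\W$.
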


\begin{proof}[Proof of Theorem~\ref{thm:explicit-matrices} and Theorem~\ref{thm:main-intro}]
\smallskip\noindent\emph{Step 1: The prescribed spectra.}
Fix $\eta>0$ and choose $Q_\eta$ as in Corollary~\ref{cor:Qeta}. Let
$d_n=\prod_{j=1}^n\lambda_{j,n}$. Since all eigenvalues have modulus $r_n$,
\[
 |d_n|=r_n^n=(1-n^{-1/6})^n\le e^{-n^{5/6}}.
\]
By Proposition~\ref{prop:holder-lower},
\[
 \varphi(\lambda_{1,n},\ldots,\lambda_{n,n})
 \ge \frac1{\|Q_\eta B_n\|_{{l_\infty^A}}}-|d_n|
 \ge \frac{\sqrt{en}}{1+\eta}-e^{-n^{5/6}}.
\]
Letting first $n\to\infty$ and then $\eta\to0$ gives
\[
 \liminf_{n\to\infty}
 \frac{\varphi(\lambda_{1,n},\ldots,\lambda_{n,n})}{\sqrt n}
 \ge\sqrt e.
\]

\smallskip\noindent\emph{Step 2: Model-operator realization.}
Coefficient conjugation is an isometry of $\W$, so
\[
 \varphi(w_{1,n},\ldots,w_{n,n})
 =\varphi(\lambda_{1,n},\ldots,\lambda_{n,n}).
\]
By \cite[proof of Theorem~8]{SZJMPA}, $T_n$ is the matrix of the backward shift in the Malmquist--Walsh basis above, and therefore $\|T_n\|_*\le1$. Moreover, \cite[Remark~12]{SZJMPA} gives
\[
 |\det T_n|\,\|T_n^{-1}\|_*
 =\varphi(w_{1,n},\ldots,w_{n,n})
 =\varphi(\lambda_{1,n},\ldots,\lambda_{n,n}).
\]
Hence
\[
 \liminf_{n\to\infty}
 \frac{|\det T_n|\,\|T_n^{-1}\|_*}
      {\|T_n\|_*^{n-1}\sqrt n}
 \ge\sqrt e,
\]
which, together with Sch\"affer's upper bound, completes the proof.
\end{proof}

\medskip
\noindent\textbf{AI statement.}
ChatGPT (OpenAI) was used to assist with the editing and writing of this
article. The research presented here forms part of a long-standing
research program of the authors, and all mathematical results and
arguments were developed by the authors.

\newpage
\appendix
\section{Appendix: Proof of Proposition~\ref{prop:coeff-norm}}\label{sec:coeff-proof}

Lemma~\ref{lem:phase-control} first gives uniform control of the phase and
its derivatives; the proof then uses stationary phase near the critical
point and van der Corput estimates elsewhere.

\subsection{Uniform control of the phase}

We work on
\[
 J_n=[\delta_n,2\pi-\delta_n],\qquad \delta_n=n^{-1/32}.
\]
The required $C^2$ control of $\tau_n$ on $J_n$ is established first.

\begin{lemma}\label{lem:phase-control}
With $r_n=1-n^{-1/6}$ and $\delta_n=n^{-1/32}$, one has, for
$j=0,1,2$,
\begin{equation}\label{eq:C2-phase}
 \sup_{t\in J_n}|\tau_n^{(j)}(t)-\tau^{(j)}(t)|\longrightarrow0.
\end{equation}
Consequently, for all sufficiently large $n$ and all $t\in J_n$,
\begin{equation}\label{eq:regularity}
 \frac1{2t}\le-\tau_n'(t)\le\frac2t,
 \qquad
 |\tau_n''(t)|\le\frac2{t^2}.
\end{equation}
In particular, $\tau_n$ is strictly decreasing on $J_n$.
\end{lemma}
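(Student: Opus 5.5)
The plan is to split the error into a smoothing part and a discretization part,
\[
 \tau_n-\tau=\bigl(P_{r_n}*\nu-\tau\bigr)+\bigl(P_{r_n}*(\mu_n-\nu)\bigr),
\]
where $\nu$ is viewed as a measure on $\T$ with density $\tau$ with respect to $dt/2\pi$, and $*$ is convolution on $\T$. Derivatives in $t$ fall on the Poisson kernel. I will control each part for $j=0,1,2$ uniformly for $t\in J_n$, using the standard kernel bounds
\[
 |P_\rho^{(k)}(x)|\lesssim_k \frac{1-\rho}{|x|^{k+2}}\quad\text{for } |x|\ge 1-\rho,
 \qquad
 \|P_\rho^{(k)}\|_{L^1}\lesssim_k (1-\rho)^{-k}.
\]
Throughout, $1-r_n=n^{-1/6}$ and $\delta_n=n^{-1/32}$.

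\emph{Smoothing part.} Fix $t\in J_n$. I split $\tau=g+h$, where $g$ is the restriction of $\tau$ to the arc $\{s:|s-t|\le\delta_n/2\}$. This arc stays at distance $\ge\delta_n/2$ from the singular point $0\equiv2\pi$, so on it $|\tau^{(k)}|\lesssim\delta_n^{-k}$. For the tail $h$, which carries total mass $O(1)$, the kernel bound gives
\[
 |(P_r^{(j)}*h)(t)|\lesssim (1-r)\,\delta_n^{-(j+2)}.
\]
This is at most $n^{-1/6+4/32}\to0$. This bound also absorbs the logarithmic singularity of $\tau$ at $0$, since $\tau\in L^1$. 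For the local part I write $(P_r*g)^{(j)}-\tau^{(j)}$ as $\int P_r(x)\bigl(\tau^{(j)}(t-x)-\tau^{(j)}(t)\bigr)\,dx/2\pi$ over $|x|\le\delta_n/2$, plus a kernel-tail term. I then Taylor expand to second order. The linear term cancels by the symmetry of $P_r$, and $\int_{|x|\le\delta_n/2} P_r(x)x^2\,dx\lesssim 1-r$. This leaves an error
\[
 \lesssim(1-r)\sup|\tau^{(j+2)}|\lesssim n^{-1/6}\delta_n^{-4}=n^{-1/6+1/8}\to0.
\]
The kernel-tail term is $\lesssim (1-r)\delta_n^{-2}\sup|\tau^{(j)}|$ on the arc, which also tends to $0$.

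\emph{Discretization part.} Put $f(s)=P_r^{(j)}(t-s)$. Since $\nu(I_{k,n})=1/n$ and $\theta_{k,n}\in I_{k,n}$,
\[
 \Bigl|\int f\,d(\mu_n-\nu)\Bigr|\le\sum_k\frac1n\,|I_{k,n}|\sup_{I_{k,n}}|f'|.
\]
The intervals satisfy $|I_{k,n}|\lesssim n^{-1/2}$. This is worst near $2\pi$, where $\tau(s)\asymp 2\pi-s$ and hence $M'(u)\asymp 1-u$. Since $n^{-1/2}\ll 1-r=n^{-1/6}$, the quantity $\sup_{I_{k,n}}|P_r^{(j+1)}|$ is comparable to the average of $|P_r^{(j+1)}|$ over $I_{k,n}$ and its neighbours. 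I then use $\frac1n=\nu(I_{k,n})$, i.e.\ $\frac1n\,|I_{k,n}|=\int_{I_{k,n}}|I_{k,n}|\tau\,\frac{ds}{2\pi}$, together with $|I_{k,n}|\,\tau\lesssim 1/n$ on $I_{k,n}$; the latter holds up to constants because $\tau$ is nearly constant on each interval, and near $s=0$ the intervals are even shorter. This turns the sum into
\[
 \lesssim \frac1n\int|P_r^{(j+1)}|\lesssim \frac1n(1-r)^{-(j+1)}\le n^{-1+1/2}\to0.
\]
This step is where I expect the main obstacle. One must check carefully that the comparability of sup and local average holds uniformly, including near the crowded endpoint $s\to0$, where $\tau$ blows up and the intervals shrink, and near $s\to2\pi$, where $\tau\to0$ and the intervals are longest. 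The claimed bound $|I_{k,n}|\lesssim n^{-1/2}$ comes from $M(1-\epsilon)\approx1-\epsilon^2/2$. I would first verify it explicitly from the local behaviour of $M^{-1}$.

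\emph{Consequences.} For the ``Consequently'' part, note that $\tau'(t)=-1/t$ and $\tau''(t)=1/t^2$. Both are at least $1/(2\pi)$, respectively $1/(4\pi^2)$, in absolute value on $(0,2\pi)$. Uniform $o(1)$ errors therefore give $\frac1{2t}\le-\tau_n'\le\frac2t$ and $|\tau_n''|\le\frac2{t^2}$ on $J_n$ for large $n$. Strict decrease of $\tau_n$ on $J_n$ follows from $-\tau_n'>0$.
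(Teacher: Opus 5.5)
Your proposal is correct and has the same architecture as the paper's proof: the split $\tau_n-\tau=P_{r_n}*(\mu_n-\nu)+(P_{r_n}*\nu-\tau)$, kernel bounds for $P_r$ and its derivatives, and the same derivation of \eqref{eq:regularity}. Two steps are handled differently.

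\emph{Discretization term.} The step you single out as the main obstacle is not one. From your own inequality $\bigl|\int f\,d(\mu_n-\nu)\bigr|\le\sum_k\frac1n|I_{k,n}|\sup_{I_{k,n}}|f'|$ and $\sum_k|I_{k,n}|=2\pi$, you get at once
\[
 \frac{2\pi}{n}\|P_{r_n}^{(j+1)}\|_\infty\lesssim n^{-1}(1-r_n)^{-(j+2)}=n^{(j-4)/6},
\]
which tends to $0$ for $j\le2$. This is exactly what the paper does, and it needs no control of $|I_{k,n}|$ near the endpoints. Your refinement only improves the exponent. To make it rigorous, the sup-versus-average comparability should be replaced by $\sup_I|f'|\le|I|^{-1}\int_I|f'|+\int_I|f''|$. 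The extra term $\frac1n\max_k|I_{k,n}|\,\|P_{r_n}^{(j+2)}\|_{L^1}$ is then controlled by your bound $\max_k|I_{k,n}|\lesssim n^{-1/2}$.

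\emph{Smoothing term.} You truncate sharply to the arc $|s-t|\le\delta_n/2$ and use the evenness of $P_r$ to cancel the linear Taylor term, so you need $\tau^{(j+2)}$ and the second moment of $P_r$. The paper instead cuts off smoothly near the singularity with $\chi_\delta$ and needs only $j+1$ derivatives together with the first moment $\int|x|P_R\lesssim h\log(e/h)$. It also gains a factor $\delta_n$ in the tail, because $f_1$ has mass $O(\delta\log(1/\delta))$. Your cruder rate $(1-r_n)\delta_n^{-j-2}$, at worst $n^{-1/24}$, still suffices.

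The price of the sharp cutoff is that moving the $j$ derivatives onto $\tau$ by integration by parts over $|x|\le\delta_n/2$ produces boundary terms $P_r^{(j-1-i)}(\pm\delta_n/2)\,\tau^{(i)}(t\mp\delta_n/2)$, $0\le i\le j-1$. These are not your kernel-tail term. The pointwise kernel bound makes them $O\bigl((1-r_n)\delta_n^{-j-1}\log(1/\delta_n)\bigr)\to0$, so they only need to be listed.
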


\begin{proof}

Fix $j\in\{0,1,2\}$. We start from the difference in
\eqref{eq:C2-phase}. Since $d\nu(s)=\tau(s)\,ds/(2\pi)$ and
\eqref{eq:tau-poisson-measure} gives $\tau_n=P_{r_n}*\mu_n$, we have
\[
 \tau_n^{(j)}-\tau^{(j)}
 =
 \bigl[P_{r_n}*(\mu_n-\nu)\bigr]^{(j)}
 +
 \bigl[(P_{r_n}*\nu)^{(j)}-\tau^{(j)}\bigr].
\]
Here
\[
 (P_R*f)(t)=\frac1{2\pi}\int_0^{2\pi}P_R(t-s)f(s)\,ds,
 \qquad
 (P_R*\mu)(t)=\int_0^{2\pi}P_R(t-s)\,d\mu(s).
\]
We estimate the two terms separately. First consider the discretization error. The intervals
\[
 I_{\ell,n}=[2\pi q_{\ell-1,n},2\pi q_{\ell,n}],
 \qquad 1\le\ell\le n,
\]
have $\nu$-mass $1/n$, and $\theta_{\ell,n}\in I_{\ell,n}$. Hence, for every
$2\pi$-periodic $g\in C^1$,
\[
 \Bigl|\int_0^{2\pi} g\,d\mu_n-\int_0^{2\pi} g\,d\nu\Bigr|
 \le
 \sum_{\ell=1}^n\int_{I_{\ell,n}}
 |g(\theta_{\ell,n})-g(t)|\,d\nu(t)
 \le
 \frac{2\pi}{n}\|g'\|_\infty.
\]

Let $h=1-R$. For $R\in[1/2,1)$, direct differentiation of the Poisson
kernel, together with
\[
 1-2R\cos x+R^2=h^2+2R(1-\cos x)
\]
and the elementary bounds $\frac{2}{\pi^2}x^2\le 1-\cos x\le\frac12x^2$ for $|x|\le\pi$, shows that there is an absolute constant $C$ such that, for $0\le m\le3$,
\[
 \|P_R^{(m)}\|_\infty\le C h^{-m-1},
 \qquad
 |P_R^{(m)}(x)|\le C h|x|^{-m-2}
 \quad(2h\le |x|\le\pi).
\]
Taking $g(s)=P_{r_n}^{(j)}(t-s)$ and using
$1-r_n=n^{-1/6}$ gives

\begin{equation}\label{eq:discretization-error}
 \sup_t
 \bigl|\bigl[P_{r_n}*(\mu_n-\nu)\bigr]^{(j)}(t)\bigr|
 \le
 \frac{C}{n}(1-r_n)^{-j-2}
 =
 Cn^{(j-4)/6}
 \longrightarrow0.
\end{equation}

We next estimate the Poisson-smoothing error. Let $R\in[1/2,1)$,
$h=1-R$, $0<\delta\le1$, and $h\le\delta/8$. The preceding bounds imply
\[
 \frac1{2\pi}\int_{-\pi}^{\pi}|x|P_R(x)\,dx
 \le C h\log\frac eh.
\]
Let $f$ be the $2\pi$-periodic $L^1$ function which equals
$\log(2\pi/t)$ on $(0,2\pi)$. Choose a smooth $2\pi$-periodic function
$\chi_\delta$ which is $0$ when
$\operatorname{dist}(t,2\pi\mathbb Z)\le\delta/4$, is $1$ when
$\operatorname{dist}(t,2\pi\mathbb Z)\ge\delta/2$, and satisfies
\[
 \|\chi_\delta^{(m)}\|_\infty\le C\delta^{-m},
 \qquad 0\le m\le3.
\]
Set $f_0=\chi_\delta f$ and $f_1=(1-\chi_\delta)f$. For
$t\in[\delta,2\pi-\delta]$, one has
$f_0^{(j)}(t)=f^{(j)}(t)$ and
\[
 (P_R*f)^{(j)}(t)-f^{(j)}(t)
 =
 P_R*(f_0^{(j)})(t)-f_0^{(j)}(t)
 +
 P_R^{(j)}*f_1(t).
\]
Since $j\le2$, Leibniz' rule and the explicit derivatives of $f$ give
\[
 \|f_0^{(j+1)}\|_\infty
 \le
 C\delta^{-j-1}\left(1+\log\frac1\delta\right),
 \qquad
 \|f_1\|_{L^1(0,2\pi)}
 \le
 C\delta\left(1+\log\frac1\delta\right).
\]
The first-moment estimate for $P_R$ yields
\[
 |P_R*(f_0^{(j)})(t)-f_0^{(j)}(t)|
 \le
 C h\delta^{-j-1}
 \left(1+\log\frac1\delta\right)\log\frac eh.
\]
Moreover, the support of $f_1$ is contained in the
$\delta/2$-neighborhood of $2\pi\mathbb Z$. Thus its circular distance
from $t\in[\delta,2\pi-\delta]$ is at least $\delta/2\ge4h$.
Using the pointwise bound for $P_R^{(j)}$ above,
\[
 |P_R^{(j)}*f_1(t)|
 \le
 C h\delta^{-j-1}
 \left(1+\log\frac1\delta\right).
\]
Therefore
\[
 \sup_{\delta\le t\le2\pi-\delta}
 |(P_R*f)^{(j)}(t)-f^{(j)}(t)|
 \le
 C h\delta^{-j-1}
 \left(1+\log\frac1\delta\right)\log\frac eh.
\]

We now take $R=r_n$, so that $h=n^{-1/6}$, and
$\delta=\delta_n=n^{-1/32}$. 
Since $d\nu=f\,dt/(2\pi)$, we have $P_{r_n}*f=P_{r_n}*\nu$. For all sufficiently large $n$, $h\le\delta_n/8$, and
\[
 \sup_{t\in J_n}
 |(P_{r_n}*\nu)^{(j)}(t)-\tau^{(j)}(t)|
 \le
 C n^{-1/6+(j+1)/32}(\log n)^2
 \longrightarrow0,
\]
since $j\leq2$. Together with \eqref{eq:discretization-error} this proves
\eqref{eq:C2-phase}. Finally
\[
 \tau'(t)=-\frac1t,
 \qquad
 \tau''(t)=\frac1{t^2}.
\]
By \eqref{eq:C2-phase}, for all sufficiently large $n$,
\[
 \sup_{J_n}|\tau_n'-\tau'|\le\frac1{4\pi},
 \qquad
 \sup_{J_n}|\tau_n''-\tau''|\le\frac1{4\pi^2}.
\]
Since $t\le2\pi$ on $J_n$,
\[
 -\tau_n'(t)
 \ge\frac1t-\frac1{4\pi}
 \ge\frac1{2t},
 \qquad
 -\tau_n'(t)
 \le\frac1t+\frac1{4\pi}
 \le\frac2t,
\]
and
\[
 |\tau_n''(t)|
 \le\frac1{t^2}+\frac1{4\pi^2}
 \le\frac2{t^2}.
\]
This proves \eqref{eq:regularity} and the strict decrease of $\tau_n$ on
$J_n$.

\end{proof}

\subsection{The oscillatory integral}

\begin{proof}[Proof of Proposition~\ref{prop:coeff-norm}]
{
For $k\ge0$, put
\[
 A_Q(t)=Q(e^{it}),
 \qquad
 a=\frac{k}{n},
 \qquad
 \psi_{n,a}(t)=\frac{\phi_n(t)}n-at.
\]
Then
\[
 \widehat{QB_n}(k)
 =\frac1{2\pi}\int_0^{2\pi}A_Q(t)e^{in\psi_{n,a}(t)}\,dt,
\]
and, by \eqref{eq:phase-derivatives-motivation},
\[
 \psi_{n,a}'(t)=\tau_n(t)-a,
 \qquad
 \psi_{n,a}''(t)=\tau_n'(t).
\]

We first separate the endpoint region from the interior interval:
\[
 E_n=[0,\delta_n]\cup[2\pi-\delta_n,2\pi],
 \qquad
 J_n=[\delta_n,2\pi-\delta_n].
\]
We begin by estimating the contribution of $E_n$, uniformly in $k$. Since $Q$ is divisible by
$(z-1)^L$, there is a constant $C_Q>0$, depending only on $Q$, such that,
for every $0<\varepsilon<1$,
\begin{equation}\label{eq:amplitude-ends}
\begin{aligned}
 &\sup_{0\le t\le4\varepsilon}|A_Q(t)|
 +\int_0^{4\varepsilon}|A_Q'(t)|\,dt\\
 &\quad+
 \sup_{2\pi-4\varepsilon\le t\le2\pi}|A_Q(t)|
 +\int_{2\pi-4\varepsilon}^{2\pi}|A_Q'(t)|\,dt
 \le C_Q\varepsilon^L,
\end{aligned}
\end{equation}
and
\[
 \|A_Q\|_\infty+\int_0^{2\pi}|A_Q'(t)|\,dt\le C_Q.
\]
In particular, since $|e^{in\psi_{n,a}(t)}|=1$,
\begin{equation}\label{eq:endpoints}
 \sup_{k\ge0}\sqrt n\,\Bigl|
 \frac1{2\pi}\int_{E_n}A_Q(t)e^{in\psi_{n,a}(t)}\,dt\Bigr|
 \le C_Q\sqrt n\,\delta_n^{L+1}
 =C_Qn^{-1/32}.
\end{equation}
Thus it remains to study the integral over $J_n$.

We shall use only the following two forms of van der Corput's lemma
\cite[Chapter~VIII, \S1.2, Corollary, p.~334]{Stein}. There is an absolute
constant $C$ such that, on any interval $I$,
\begin{equation}\label{eq:vdc1}
 \Bigl|\int_I g(t)e^{in\phi(t)}\,dt\Bigr|
 \le\frac{C}{n\gamma}
 \left(\|g\|_\infty+\int_I|g'(t)|\,dt\right)
\end{equation}
if $\phi'$ is monotone and $|\phi'|\ge\gamma$ on $I$, while
\begin{equation}\label{eq:vdc2}
 \Bigl|\int_I g(t)e^{in\phi(t)}\,dt\Bigr|
 \le\frac{C}{\sqrt{n\gamma}}
 \left(\|g\|_\infty+\int_I|g'(t)|\,dt\right)
\end{equation}
if $|\phi''|\ge\gamma$ on $I$. In what follows, we use the standard Landau notation $O_\varepsilon(\cdot)$, $O_Q(\cdot)$, and $O_{Q,\varepsilon}(\cdot)$: the constants may depend on $\varepsilon$, on $Q$, or on both, but not on $n$, $k$, or $a$. We also write $X\asymp_\varepsilon Y$ when $c_\varepsilon Y\le X\le C_\varepsilon Y$ for some positive constants $c_\varepsilon,C_\varepsilon$ depending only on $\varepsilon$. All these bounds are uniform in $n$, $k$, and $a$.

Fix $\varepsilon\in(0,\pi/8)$ and take $n$ so large that
$\delta_n<2\varepsilon$. By Lemma~\ref{lem:phase-control}, $\tau_n$ is
strictly decreasing on $J_n$. Set
\[
 \alpha_n=\tau_n(2\pi-\delta_n),
 \qquad
 \alpha_{n,\varepsilon}=\tau_n(2\pi-2\varepsilon),
\]
\[
 \beta_{n,\varepsilon}=\tau_n(2\varepsilon),
 \qquad
 \beta_n=\tau_n(\delta_n).
\]
Then
\[
 \alpha_n<\alpha_{n,\varepsilon}
 <\beta_{n,\varepsilon}<\beta_n.
\]
We divide the possible values of $a=k/n$ into five regions:
\begin{enumerate}
\item[(I)] $a<\alpha_n$: there is no stationary point in $J_n$;
\item[(II)] $\alpha_n\le a<\alpha_{n,\varepsilon}$: the unique stationary
point belongs to $(2\pi-2\varepsilon,2\pi-\delta_n]$;
\item[(III)] $\alpha_{n,\varepsilon}\le a\le\beta_{n,\varepsilon}$: the
unique stationary point belongs to $[2\varepsilon,2\pi-2\varepsilon]$;
\item[(IV)] $\beta_{n,\varepsilon}<a\le\beta_n$: the unique stationary
point belongs to $[\delta_n,2\varepsilon)$;
\item[(V)] $a>\beta_n$: there is no stationary point in $J_n$.
\end{enumerate}
These regions are exhaustive because $\psi_{n,a}'=\tau_n-a$ and $\tau_n$ is
strictly decreasing. Cases (I)--(II) concern the right endpoint and Cases
(IV)--(V) the left endpoint. Case (III) contains an interior stationary point
and is the only one that produces the leading term. In the other four cases,
van der Corput's lemma, together with the zero of $Q$ at $1$, gives a
negligible contribution.

\medskip\noindent\emph{Cases (I)--(II): the right endpoint.}
In both cases split $J_n$ at $2\pi-4\varepsilon$. On
$[2\pi-4\varepsilon,2\pi-\delta_n]$, Lemma~\ref{lem:phase-control} gives
\[
 |\psi_{n,a}''(t)|=-\tau_n'(t)\ge\frac1{2t}\ge\frac1{4\pi}.
\]
Hence \eqref{eq:vdc2} and \eqref{eq:amplitude-ends} show that this part of
the integral is $O_Q(\varepsilon^L n^{-1/2})$.

On $[\delta_n,2\pi-4\varepsilon]$ we use \eqref{eq:vdc1}. In Case~(I),
for $t\le2\pi-4\varepsilon$,
\begin{align*}
 \psi_{n,a}'(t)
 &\ge \tau_n(2\pi-4\varepsilon)-\tau_n(2\pi-\delta_n)\\
 &=\int_{2\pi-4\varepsilon}^{2\pi-\delta_n}(-\tau_n'(s))\,ds
 \ge\frac{\varepsilon}{2\pi}.
\end{align*}
In Case~(II), if $t_{n,a}$ is the stationary point, then
$t_{n,a}>2\pi-2\varepsilon$, and therefore
\begin{align*}
 \psi_{n,a}'(t)
 &\ge \tau_n(2\pi-4\varepsilon)-\tau_n(2\pi-2\varepsilon)\\
 &=\int_{2\pi-4\varepsilon}^{2\pi-2\varepsilon}(-\tau_n'(s))\,ds
 \ge\frac{\varepsilon}{2\pi}.
\end{align*}
Since $\psi_{n,a}'$ is decreasing, \eqref{eq:vdc1} gives
$O_{Q,\varepsilon}(n^{-1})$ for this part. Together with
\eqref{eq:endpoints}, uniformly in Cases~(I)--(II),
\begin{equation}\label{eq:right-cases}
 \sqrt n\,|\widehat{QB_n}(k)|
 =O_Q(\varepsilon^L+n^{-1/32})+O_{Q,\varepsilon}(n^{-1/2}).
\end{equation}
The boundary value $a=\alpha_n$ is included in Case~(II).

\medskip\noindent\emph{Case (III): an interior stationary point.}
Let $t_{n,a}\in[2\varepsilon,2\pi-2\varepsilon]$ be the unique solution of
$\tau_n(t)=a$, and put
\[
 t_a=2\pi e^{-a},
\]
so that $\tau(t_a)=a$.

\smallskip\noindent\emph{Location of the stationary point.}
Since $\tau_n(t_{n,a})=\tau(t_a)$, Lemma~\ref{lem:phase-control} gives,
uniformly in Case~(III),
\[
 \bigl|\log\frac{t_{n,a}}{t_a}\bigr|
 =|\tau_n(t_{n,a})-\tau(t_{n,a})|
 \le\sup_{t\in J_n}|\tau_n(t)-\tau(t)|\longrightarrow0,
\]
and
\[
 \bigl|-\tau_n'(t_{n,a})-\frac1{t_{n,a}}\bigr|
 \le\sup_{t\in J_n}|\tau_n'(t)-\tau'(t)|\longrightarrow0.
\]
Hence
\begin{equation}\label{eq:critical-limit}
 \sup_{\substack{k\ge0\\
  \alpha_{n,\varepsilon}\le k/n\le\beta_{n,\varepsilon}}}
 |t_{n,k/n}-t_{k/n}|\longrightarrow0,
 \qquad
 \sup_{\substack{k\ge0\\
  \alpha_{n,\varepsilon}\le k/n\le\beta_{n,\varepsilon}}}
 \bigl|t_{n,k/n}[-\tau_n'(t_{n,k/n})]-1\bigr|\longrightarrow0.
\end{equation}

\smallskip\noindent\emph{Local stationary-phase estimate.}
On the fixed interval $[\varepsilon,2\pi-\varepsilon]$ there are constants
$c_\varepsilon,C_\varepsilon>0$ such that, for all large $n$,
\begin{equation}\label{eq:central-derivatives}
 c_\varepsilon\le-\psi_{n,a}''(t)\le C_\varepsilon,
 \qquad
 |\psi_{n,a}^{(3)}(t)|\le C_\varepsilon
\end{equation}
for every $a$ in Case~(III).

Set $\rho_n=n^{-2/5}$ and $t_0=t_{n,a}$. For large $n$,
$[t_0-\rho_n,t_0+\rho_n]\subset[\varepsilon,2\pi-\varepsilon]$. Write
\[
 \frac1{2\pi}\int_{J_n}A_Q(t)e^{in\psi_{n,a}(t)}\,dt
 =I^-_{n,a}+I^0_{n,a}+I^+_{n,a},
\]
where $I^0_{n,a}$ is the integral over
$[t_0-\rho_n,t_0+\rho_n]$ and $I^-_{n,a},I^+_{n,a}$ are the two remaining
parts of $J_n$.

Fedoryuk's one-dimensional stationary-phase theorem
\cite[Chapter~III, \S1, Theorem~1.7]{Fedoryuk77},
applied to the phase $-n\psi_{n,a}$ and then conjugating, gives the same
leading term for
\[
 \int_{t_0-\rho_n}^{t_0+\rho_n}
 e^{in\psi_{n,a}(t)}\,dt.
\]
We give below a direct proof for $I^0_{n,a}$, which also includes the
factor $A_Q(t)$, with a remainder $O_{Q,\varepsilon}(n^{-3/5})$,
uniform in $a$, and hence $o(n^{-1/2})$. Put
\[
 \Psi_{n,a}(t)=-n\psi_{n,a}(t),
 \qquad
 \kappa_{n,a}=\Psi_{n,a}''(t_0)
 =n[-\tau_n'(t_0)].
\]
By \eqref{eq:central-derivatives},
$\kappa_{n,a}\asymp_\varepsilon n$. With
$\ell_{n,a}=\rho_n\sqrt{\kappa_{n,a}}$, one has
$\ell_{n,a}\asymp_\varepsilon n^{1/10}\to\infty$. Therefore the Fresnel
integral and one integration by parts in its tails give
\begin{equation}\label{eq:quadratic-uniform}
 \int_{-\rho_n}^{\rho_n}e^{i\kappa_{n,a}x^2/2}\,dx
 =\sqrt{\frac{2\pi}{\kappa_{n,a}}}\,e^{i\pi/4}
 +O_\varepsilon\!\left(\frac1{\kappa_{n,a}\rho_n}\right)
 =\sqrt{\frac{2\pi}{\kappa_{n,a}}}\,e^{i\pi/4}
 +O_\varepsilon(n^{-3/5}).
\end{equation}
Moreover, Taylor's formula and \eqref{eq:central-derivatives} yield, for
$|x|\le\rho_n$,
\[
 \bigl|\Psi_{n,a}(t_0+x)-\Psi_{n,a}(t_0)
 -\frac{\kappa_{n,a}}2x^2\bigr|
 \le C_\varepsilon n|x|^3.
\]
Since $|e^{iu}-e^{iv}|\le|u-v|$, integrating this bound gives an error
$O_\varepsilon(n\rho_n^4)=O_\varepsilon(n^{-3/5})$. Combining it with
\eqref{eq:quadratic-uniform}, and then returning to $\psi_{n,a}$, gives
\[
 \int_{t_0-\rho_n}^{t_0+\rho_n}e^{in\psi_{n,a}(t)}\,dt
 =\sqrt{\frac{2\pi}{n[-\psi_{n,a}''(t_0)]}}
 e^{i(n\psi_{n,a}(t_0)-\pi/4)}
 +O_\varepsilon(n^{-3/5}).
\]
Finally,
\[
 |A_Q(t)-A_Q(t_0)|\le\|A_Q'\|_\infty|t-t_0|,
\]
so replacing $A_Q(t)$ by $A_Q(t_0)$ in $I^0_{n,a}$ costs
$O_Q(\rho_n^2)=O_Q(n^{-4/5})$. Thus
\begin{equation}\label{eq:central}
 I^0_{n,a}
 =\frac{e^{i(n\psi_{n,a}(t_0)-\pi/4)}A_Q(t_0)}
 {\sqrt{2\pi n(-\tau_n'(t_0))}}
 +O_{Q,\varepsilon}(n^{-3/5}).
\end{equation}

It remains to estimate $I^-_{n,a}$ and $I^+_{n,a}$. Since $\psi_{n,a}'(t_0)=0$, \eqref{eq:central-derivatives} gives
\[
 \psi_{n,a}'(t_0-\rho_n)\ge c_\varepsilon\rho_n,
 \qquad
 -\psi_{n,a}'(t_0+\rho_n)\ge c_\varepsilon\rho_n.
\]
Because $\psi_{n,a}'$ is decreasing, the same lower bound for its modulus
holds on the whole domains of $I^-_{n,a}$ and $I^+_{n,a}$. Hence
\eqref{eq:vdc1} gives
\begin{equation}\label{eq:sides}
 |I^-_{n,a}|+|I^+_{n,a}|
 =O_{Q,\varepsilon}\!\left(\frac1{n\rho_n}\right)
 =O_{Q,\varepsilon}(n^{-3/5}).
\end{equation}
After multiplication by $\sqrt n$, the main term in \eqref{eq:central} has
modulus
\[
 \frac{|A_Q(t_0)|}{\sqrt{2\pi[-\tau_n'(t_0)]}},
\]
which converges uniformly, by \eqref{eq:critical-limit}, to
$\sqrt{t_a/(2\pi)}|Q(e^{it_a})|$. Combining this with
\eqref{eq:endpoints} and \eqref{eq:sides}, we obtain
\begin{equation}\label{eq:central-main}
 \sup_{\substack{k\ge0\\
  \alpha_{n,\varepsilon}\le k/n\le\beta_{n,\varepsilon}}}
 \Bigl|\sqrt n\,|\widehat{QB_n}(k)|
 -\sqrt{\frac{t_{k/n}}{2\pi}}|Q(e^{it_{k/n}})|\Bigr|
 \longrightarrow0.
\end{equation}

\medskip\noindent\emph{Cases (IV)--(V): the left endpoint.}
In both cases split $J_n$ at $4\varepsilon$. On
$[\delta_n,4\varepsilon]$,
\[
 |\psi_{n,a}''(t)|=-\tau_n'(t)\ge\frac1{2t}\ge\frac1{8\varepsilon},
\]
so \eqref{eq:vdc2} and \eqref{eq:amplitude-ends} give
$O_Q(\varepsilon^{L+1/2}n^{-1/2})$ for this part.

On $[4\varepsilon,2\pi-\delta_n]$ we use \eqref{eq:vdc1}. In Case~(IV),
if $t_{n,a}<2\varepsilon$ is the stationary point, then for
$t\ge4\varepsilon$,
\[
 -\psi_{n,a}'(t)
 \ge\tau_n(2\varepsilon)-\tau_n(4\varepsilon)
 =\int_{2\varepsilon}^{4\varepsilon}(-\tau_n'(s))\,ds
 \ge\frac12\log2.
\]
In Case~(V),
\[
 |\psi_{n,a}'(t)|
 \ge\tau_n(\delta_n)-\tau_n(4\varepsilon)
 \ge\int_{2\varepsilon}^{4\varepsilon}\frac{ds}{2s}
 =\frac12\log2.
\]
Thus the second part is $O_Q(n^{-1})$. Together with
\eqref{eq:endpoints}, uniformly in Cases~(IV)--(V),
\begin{equation}\label{eq:left-cases}
 \sqrt n\,|\widehat{QB_n}(k)|
 =O_Q(\varepsilon^{L+1/2}+n^{-1/2}+n^{-1/32}).
\end{equation}
The boundary value $a=\beta_n$ is included in Case~(IV).

We now take the supremum over $k$. For fixed $\varepsilon$,
\eqref{eq:right-cases}, \eqref{eq:central-main}, and
\eqref{eq:left-cases} show that there is a constant $K_Q>0$, independent
of $\varepsilon$, such that
\[
 \limsup_{n\to\infty}\sqrt n\,\|QB_n\|_{l_\infty^A}
 \le \max\{C(Q),K_Q\varepsilon^L\},
\]
because $0<\varepsilon<1$. Letting $\varepsilon\to0$ yields
\begin{equation}\label{eq:coeff-upper}
 \limsup_{n\to\infty}\sqrt n\,\|QB_n\|_{l_\infty^A}\le C(Q).
\end{equation}

For the reverse inequality, let
\[
 F(t)=\sqrt{\frac{t}{2\pi}}|Q(e^{it})|.
\]
The function $F$ vanishes at both endpoints and is not identically zero, so
it attains its maximum $C(Q)$ at some $t_*\in(0,2\pi)$. Put
\[
 a_*=\tau(t_*)=\log\frac{2\pi}{t_*},
\]
and let $k_n$ be the nearest integer to $na_*$. Choose
$\varepsilon\in(0,\pi/8)$ so small that
$2\varepsilon<t_*<2\pi-2\varepsilon$. Since
$k_n/n\to a_*$ and $\tau_n\to\tau$ uniformly at the two fixed points
$2\varepsilon$ and $2\pi-2\varepsilon$, for all sufficiently large $n$,
\[
 \alpha_{n,\varepsilon}<\frac{k_n}{n}<\beta_{n,\varepsilon}.
\]
Thus $k_n/n$ belongs to Case~(III). Since
$t_{k_n/n}=2\pi e^{-k_n/n}\to t_*$, \eqref{eq:central-main} gives
\[
 \sqrt n\,|\widehat{QB_n}(k_n)|\longrightarrow F(t_*)=C(Q).
\]
Therefore
\[
 \liminf_{n\to\infty}\sqrt n\,\|QB_n\|_{l_\infty^A}\ge C(Q).
\]
Together with \eqref{eq:coeff-upper}, this proves \eqref{eq:coeff-norm}.
}
\end{proof}

\end{document}